\documentclass[reqno,12pt]{amsart}
\usepackage[centertags]{amsmath}
\usepackage{amsfonts,amssymb,mathrsfs}
\usepackage{colonequals}
\usepackage{mathtools}
\usepackage{graphicx}
\usepackage{placeins}
\usepackage{color}
\usepackage{enumerate}
\usepackage[latin1]{inputenc}
\usepackage{float}
\usepackage{longtable}
\usepackage{booktabs}
\usepackage{siunitx}
\usepackage{fullpage}

\newtheorem{theorem}{Theorem}
\newtheorem{corollary}{Corollary}
\newtheorem{proposition}{Proposition}
\newtheorem{lemma}{Lemma}
\newtheorem{conjecture}{Conjecture}%

\theoremstyle{definition}
\newtheorem{definition}{Definition}
\theoremstyle{remark}

\newcommand{\midd}{{\,|\,}}

\newcommand{\nmidd}{{\,\nmid\,}}

\begin{document}

\title{Matchable numbers}
\date{\today}

\author{Nathan McNew}
\address{Department of Mathematics, Towson University, Towson, MD 21252, USA}
\email{nmcnew@towson.edu}
\author{Carl Pomerance}
\address{Mathematics Department, Dartmouth College, Hanover, NH 03755, USA}
\email{carlp@math.dartmouth.edu}

\begin{abstract}
We say a natural number $n$ is matchable if there is a bijection from the set of $\tau(n)$ divisors of $n$ to the set $\{1,2,\dots,\tau(n)\}$, where corresponding numbers are relatively prime. We show that the set of matchable numbers has an asymptotic
density, which we compute, and we show that every squarefree number is matchable. We also present some related unsolved problems.
\end{abstract}

\keywords{}
\maketitle

\section{Introduction}
\label{S:intro}

Given two finite sets of integers of the same cardinality, a bijection $\psi$ between them is said to be a coprime matching if for each $x$ in the domain of $\psi$,
$x$ and $\psi(x)$ are coprime.  Alternatively, one
can consider the bipartite graph from one set to the other where there is an edge whenever the numbers on the edge's vertices are coprime: a coprime matching is a perfect matching in this graph.

There have been several papers on coprime matchings over the years, mostly where the two sets are intervals of consecutive integers. For example, in \cite{PS}, Pomerance
and Selfridge proved a conjecture of D. J. Newman that there is always a coprime matching from $\{1,2,\dots,n\}$ to any other interval of $n$ consecutive integers. This was generalized by Bohman and Peng \cite{BP}
to some cases where the intervals are arbitrarily placed on the number line, and they
showed a connection to the notorious lonely runner conjecture. Their paper was subsequently improved in \cite{P1} and generalized to a counting problem in \cite{P2}, with further progress
by McNew \cite{M} and Sah and Sawhney \cite{SS}.

In this paper we consider a problem of Recam\'an \cite{Re}, where one of the sets continues to be an initial interval of consecutive integers, but the other set is generally not an interval, rather it is the set of divisors of a number $n$.
More precisely, for
a positive integer $n$ let $D(n)$ denote the set of divisors of $n$, and $\tau(n)=|D(n)|$. We say an integer $n$ is \textit{matchable} if there is a coprime matching between
$\{1,2,\dots,\tau(n)\}$ and $D(n)$.

One might think at first that every number is matchable, and this holds for $n=1,2,\dots,7$. However, 8 is not matchable, nor is any subsequent multiple of 4. The proof is easy:
If $4\midd n$, then at least $2/3$ of the members of $D(n)$ are even, but fewer than
$2/3$ of the members of $\{1,2,\dots,k\}$ are odd when $k>3$. Since even divisors must be mapped to odd numbers in a coprime matching, proper multiples of 4 are seen to be not matchable. This can be generalized to other primes as well, see below.

Say a number $n$ is an M-number if it is not divisible by any $p^p$ with $p$ prime. For example, every squarefree number is an M-number. It is easy to see that the set of M-numbers possesses an asymptotic density, which is
\[
\alpha=\prod_{p\,{\rm prime}}\left(1-\frac1{p^p}\right)=0.72199023441955\dots\, .
\]
Among the comments in \cite{Re} we find the conjecture of K\"onig and Alekseyev that every 
M-number is matchable, and few non-M-numbers are matchable,
and in particular, the asymptotic density of the set of matchable numbers is $\alpha$.
 In this paper we prove that the asymptotic density of the
 symmetric difference of the set of matchable numbers and the set of M-numbers is 0, so
 that the density of the set of matchable numbers is indeed $\alpha$.
 
 We agree with the conjecture of K\"onig and Alekseyev that every M-number is matchable.
 Towards a proof we show at least that every squarefree number is matchable.  
 Probably our techniques can be extended to the remaining M-numbers.
 
For each prime $p$ let
\[
M_p=\prod_{q\le p} q^{q-1},
\]
where $q$ runs over primes. It is easy to see that each $M_p$ is matchable. Indeed, for $1\le j\le\tau(M_p)$, we map $j$ to
\[
\psi(j):= \prod_{q\le p} q^{(j\bmod q)}.
\]
To see this note that since $\tau(M_p)=\prod_{q\le p}q$, the Chinese remainder theorem shows that each integer $j\in[1,\tau(M_p)]$ corresponds to a unique vector
$(j\bmod 2,j\bmod 3,\dots,j\bmod p)$. Further, for $q\le p$, $q\midd j$ if and only if $q\nmidd
\psi(j)$.

The set of M-numbers is precisely the set of all divisors of the numbers $M_p$
as $p$ varies. As noted in \cite{Re}, if one can prove that all of the divisors of a matchable number are themselves matchable, we would immediately have the corollary that
every M-number is matchable. Unfortunately, we did not find a way to make this elegant plan work.

\section{Preliminary results}

We generalize the result that if $4\midd n$ and $n>4$, then $n$ is not matchable.

\begin{proposition}
  Suppose $p^p|n$ for some prime $p$, and let $0\leq r<p$ be the remainder when $\tau(n)$ is divided by $p$. If $\tau(n)>r(p+1)$, then $n$ is not matchable.
\end{proposition}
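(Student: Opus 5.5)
Write $n=p^a m$ with $p\nmid m$ and $a\ge p$, so $\tau(n)=(a+1)\tau(m)$. We count divisibility by $p$ on each side and use that in a coprime matching every divisor of $n$ divisible by $p$ must be sent to an integer in $\{1,\dots,\tau(n)\}$ not divisible by $p$. Hence matchability forces
\[
\#\{d\in D(n): p\mid d\}\le \#\{j\le \tau(n): p\nmid j\}.
\]
We will show this inequality fails under the hypothesis $\tau(n)>r(p+1)$.

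First, count the left side. The divisors $d=p^e d'$ with $d'\mid m$ have $p\mid d$ exactly when $e\ge1$. So the left side is $a\tau(m)=\frac{a}{a+1}\tau(n)\ge \frac{p}{p+1}\tau(n)$, using $a\ge p$.

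Next, count the right side. Write $\tau(n)=kp+r$ with $0\le r<p$. Exactly $k$ of the integers $1,\dots,\tau(n)$ are divisible by $p$, so the right side is
\[
\tau(n)-k=\tau(n)-\frac{\tau(n)-r}{p}=\frac{(p-1)\tau(n)+r}{p}.
\]

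Now compare the two counts. Matchability would require $\frac{p}{p+1}\tau(n)\le \frac{(p-1)\tau(n)+r}{p}$. Multiplying by $p(p+1)$ gives $p^2\tau(n)\le (p^2-1)\tau(n)+r(p+1)$, that is, $\tau(n)\le r(p+1)$. This contradicts the hypothesis, so $n$ is not matchable.

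This is exactly the pigeonhole argument given earlier for $p=2$. The only points needing care are the exact count of multiples of $p$ up to $\tau(n)$, which is $\lfloor \tau(n)/p\rfloor=(\tau(n)-r)/p$, and keeping the inequality direction straight when clearing denominators. I expect no real obstacle.
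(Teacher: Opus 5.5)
Your proof is correct and is essentially the paper's argument. The paper compares the complementary counts: at most $\tau(n)/(p+1)$ divisors coprime to $p$ against $(\tau(n)-r)/p$ multiples of $p$ in $[1,\tau(n)]$. That inequality is equivalent to yours after subtracting both sides from $\tau(n)$, and it reduces to the same condition $\tau(n)\le r(p+1)$.
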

\begin{proof}
Suppose $p^k$ is the largest power of $p$ dividing $n$, with $k\geq p$. We can partition the $
\tau(n)$ divisors of $n$ into $k+1$ sets each having size $\tau(n/p^k)$ according to the power to which $p$ appears as a factor. Only one of those sets will contain integers coprime to $p$, and so the total number of divisors of $n$ coprime to $p$ is $\frac{\tau(n)}{k+1} \leq \frac{\tau(n)}
{p+1}$. On the other hand, the number of integers in $[1,\tau(n)]$ divisible by $p$ is
\begin{equation}
\left\lfloor\frac{\tau(n)}p\right\rfloor = \frac{\tau(n) - r}{p}.\label{eq:taupratio} \end{equation} Since $\tau(n)>r(p+1)$ we find that $(\tau(n) -r)(p+1)>\tau(n)p$ and thus the quantity in
\eqref{eq:taupratio} is strictly greater than $\frac{\tau(n)}{p+1}$, the upper bound we just found for the number of divisors of $n$ coprime to $p$. Thus, there are too few divisors of $n$ coprime to
$p$ to match with these integers up to $\tau(n)$ divisible by $p$.
\end{proof}

\begin{corollary}
\label{cor1}
If $p^p|n$ for some prime $p$, and $\tau(n) \geq p^2$, then $n$ is not matchable.
\end{corollary}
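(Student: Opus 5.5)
The plan is to derive the corollary directly from the preceding Proposition. All that needs checking is that the hypothesis $\tau(n)\ge p^2$ forces $\tau(n)>r(p+1)$, where $r$ is the remainder of $\tau(n)$ modulo $p$.

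Since $0\le r<p$, we have $r\le p-1$. Hence
\[
r(p+1)\le (p-1)(p+1)=p^2-1<p^2\le \tau(n).
\]
So $\tau(n)>r(p+1)$, and together with the assumption $p^p\midd n$ the Proposition applies and shows that $n$ is not matchable.

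There is no real obstacle here. The only point to keep in mind is that the bound must hold for every possible residue $r$. This is why the worst case $r=p-1$ is used, and it accounts for the threshold $p^2$ in the statement.
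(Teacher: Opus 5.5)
Your proposal is correct and matches the paper's intended argument: the paper states this corollary without writing out a proof, as an immediate consequence of the preceding Proposition. The bound $r \le p-1$ gives $r(p+1) \le p^2-1 < p^2 \le \tau(n)$, and this is exactly the check needed to apply the Proposition.
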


Let $\omega(n)$ denote the number of different primes that divide $n$. Note that
$\tau(n)\ge 2^{\omega(n)}$, with equality if and only if $n$ is squarefree.

\begin{corollary}
\label{cor:ud}
The upper density of the set of matchable integers is at most $\alpha$.
\end{corollary}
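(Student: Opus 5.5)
The plan is to show that, apart from a set of density zero, every non-M-number fails to be matchable by Corollary~\ref{cor1}. The upper density of the matchable numbers is then at most the density $\alpha$ of the M-numbers. Concretely, the matchable numbers lie in the union of the M-numbers and the set
\[
E=\{n:\ p^p\midd n \text{ for some prime } p \text{ with } \tau(n)<p^2\}.
\]
So it suffices to prove that $E$ has asymptotic density $0$.

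I would split $E$ according to the size of the prime $p$, using a parameter $P$ that will later tend to infinity. First consider the numbers divisible by $p^p$ for some $p>P$. Their upper density is at most $\sum_{p>P}p^{-p}$, which tends to $0$ as $P\to\infty$. Next take a fixed prime $p\le P$. The numbers $n\in E$ attached to $p$ satisfy $\tau(n)<p^2\le P^2$, so $\omega(n)\le \log_2\tau(n)<2\log_2 P$. This is a bounded number of prime factors.

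The key input is the classical fact that, for any fixed $K$, the set of $n$ with $\omega(n)\le K$ has density $0$. This follows from Hardy--Ramanujan, or more simply from the Tur\'an variance bound $\sum_{n\le x}(\omega(n)-\log\log x)^2\ll x\log\log x$: by Chebyshev's inequality, all but $o(x)$ integers $n\le x$ have $\omega(n)>\tfrac12\log\log x$. Hence for fixed $P$ the part of $E$ coming from primes $p\le P$ has density $0$. Adding the two pieces, the upper density of $E$ is at most $\sum_{p>P}p^{-p}$, and letting $P\to\infty$ gives density $0$.

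To finish, the set of matchable numbers is contained in the union of the M-numbers and $E$. Its upper density is therefore at most $\alpha+0=\alpha$. The only step needing real input is the density-zero statement for integers with boundedly many prime factors. If I wanted to avoid citing Tur\'an, I would instead use the elementary bound $\#\{n\le x:\omega(n)=k\}\ll x(\log\log x+C)^{k-1}/((k-1)!\log x)$, which is $o(x)$ for each fixed $k$. Either route is standard, so I do not expect a serious obstacle; the main care is in ordering the limits, taking $x\to\infty$ first and then $P\to\infty$.
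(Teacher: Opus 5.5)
Your proposal is correct and follows essentially the same route as the paper. You split the matchable non-M-numbers into those divisible by $p^p$ for some large prime $p$ (upper density at most $\sum_{p>P}p^{-p}$, which the paper bounds as $\ll N^{-N}$) and those with $p^p\mid n$ for a small $p$; the latter, by Corollary~\ref{cor1} and $\tau(n)\ge 2^{\omega(n)}$, have boundedly many prime factors and so form a density-zero set. The only cosmetic difference is that the paper cites the bound $\ll x(\log\log x)^{K}/\log x$ for that count, whereas you use Tur\'an's variance estimate or the Hardy--Ramanujan inequality; either suffices.
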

\begin{proof}
It suffices to show that the set of matchable numbers that are not M-numbers has asymptotic density 0. The set of integers divisible by some $p^p$ for $p\ge N$ has density $\ll N^{-N}$. If $p^p\midd n$ and $n$ is matchable, then Corollary \ref{cor1} implies that $\omega(n) \le 2\log p/\log 2 $. So, if $n$ is matchable and not
an M-number, it is either divisible by some $p^p$ for $p\ge N$ or $\omega(n)\le 2\log N/\log 2$.

The counting function to $x$ of the latter numbers $n$ is $\ll x(\log\log x)^{2\log N/\log 2}/\log x$, so for $N$ fixed, the set has density 0. Putting the two together, the upper density is
$\le N^{-N}$, and since $N$ is arbitrary, the corollary is proved.
\end{proof}

We remark that if we let $N=\log\log x$ in the proof, we have the counting function to $x$ of the set of matchable numbers that are not M-numbers is $\le x/(\log x)^{1+o(1)}$ as
$x\to\infty$. This result is best possible since every number $27p$ with $p$ prime is matchable (as is easily checked), yet not an M-number.

Our plan is to first prove that squarefree numbers with at least 45 prime factors are
matchable, and then by a somewhat different method, we prove it for squarefree numbers with fewer than 45 prime factors. Finally, we extend the argument to M-numbers with
sufficiently many prime-power divisors not of the form $p^{p-1}$ and not divisible by the square of any large prime, and use a density argument to finish.

We conclude with some open problems and a discussion of strongly matchable numbers. These are numbers $n$ such that there is a coprime matching between $D(n)$ and every coprime arithmetic progression of $\tau(n)$ integers.

\section{Squarefree numbers with many prime factors}

\begin{lemma}\label{lem:half}
If $2n$ is matchable, then so is $n$.
\end{lemma}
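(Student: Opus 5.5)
The plan is to split according to the parity of $n$. The even case is vacuous, apart from one trivial value, by the non-matchability of multiples of $4$. The odd case is handled by restricting a given matching of $2n$ and halving.

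\emph{Case $n$ even.} Then $4\midd 2n$. If $2n>4$, the argument in the introduction (equivalently Corollary~\ref{cor1} with $p=2$) shows that $2n$ is not matchable. To apply Corollary~\ref{cor1} we need $\tau(2n)\ge 4$. This holds: $4\midd 2n$ gives $\tau(2n)\ge 3$, and $\tau(2n)=3$ forces $2n=4$. So the hypothesis can hold only when $2n=4$, i.e.\ $n=2$. This $n$ is matchable directly via $1\mapsto 2$, $2\mapsto 1$.

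\emph{Case $n$ odd.} Now $D(2n)=D(n)\cup 2D(n)$ is a disjoint union and $\tau(2n)=2\tau(n)$. Let $\psi\colon D(2n)\to\{1,\dots,2\tau(n)\}$ be a coprime matching.
\begin{enumerate}
\item Each even divisor $2d$ must be sent to an odd number.
\item There are exactly $\tau(n)$ even divisors and exactly $\tau(n)$ odd numbers in $[1,2\tau(n)]$. Since $\psi$ is injective, it maps the even divisors \emph{onto} the odd numbers.
\item Hence $\psi$ maps the odd divisors $D(n)$ bijectively onto the even numbers $\{2,4,\dots,2\tau(n)\}$.
\item Define $\phi(d)=\psi(d)/2$ for $d\in D(n)$. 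This is a bijection $D(n)\to\{1,\dots,\tau(n)\}$.
\item It is a coprime matching: $\gcd(d,\psi(d))=1$ and $\phi(d)\midd\psi(d)$ give $\gcd(d,\phi(d))=1$.
\end{enumerate}

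There is no real obstacle here. The only point needing care is the counting step (2), which forces the odd divisors onto exactly the even integers. In the even case one must also remember the boundary value $n=2$.
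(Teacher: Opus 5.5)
Your proof is correct and follows the paper's argument. For odd $n$, the even divisors $2D(n)$ are forced onto the $\tau(n)$ odd integers, so $D(n)$ lands on $\{2,4,\dots,2\tau(n)\}$ and halving gives the matching. You also make explicit what the paper leaves as ``we may assume $n$ is odd'': for even $n$ the hypothesis forces $2n=4$ via Corollary~\ref{cor1} with $p=2$, and $n=2$ is matchable directly.
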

\begin{proof}
We may assume $n$ is odd. A coprime matching for $2n$ pairs $D(2n)=D(n)\cup 2D(n)$ with
$[1,\tau(2n)]=[1,2\tau(n)]$. Since the even divisors $2D(n)$ must be paired with odd integers in
$[1,2\tau(n)]$, the odd divisors $D(n)$ are paired with the even integers $\{2,4,\ldots,2\tau(n)\}$. Dividing by 2, this gives a coprime matching of $D(n)$ with $[1,\tau(n)]$, so $n$ is matchable.
\end{proof}

\begin{theorem}
\label{th:sqfr}
Every squarefree number having at least $45$ prime factors is matchable.
\end{theorem}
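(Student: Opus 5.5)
We would argue by Hall's marriage theorem, checked from the side of the integers $1,\dots,N$ with $N=\tau(n)=2^k$, where $k=\omega(n)\ge 45$. By Lemma \ref{lem:half} we may assume that $n$ is even: if $n$ is odd with $k$ prime factors, then $2n$ is squarefree with $k+1\ge45$ prime factors, and proving $2n$ matchable gives $n$ matchable. So write $n=2p_2\cdots p_k$ with $p_2<\dots<p_k$ odd primes.

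An integer $j\le N$ is adjacent exactly to the divisors of $n/\gcd(j,n)$. For a set $T\subseteq[1,N]$, its neighbourhood is therefore a down-closed family $U\subseteq D(n)$. Enlarging $T$ to every $j$ whose neighbourhood lies in $U$ only makes Hall's inequality harder. Passing to complements $g=n/d$, Hall's condition becomes the following statement: for every up-closed family $F\subseteq D(n)$ (closed under taking multiples within $D(n)$),
\[
\sum_{h\in F} c(h)\le |F|,\qquad c(h):=\#\{j\le N:\gcd(j,n)=h\}.
\]
The rest of the proof is a quantitative verification of this inequality, identifying $D(n)$ with the Boolean lattice on the $k$ primes.

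First I would estimate $c(h)$. Trivially $c(h)\le N/h$, and a sieve gives
\[
c(h)\le \frac{N}{h}\prod_{p\mid n/h}\Bigl(1-\frac1p\Bigr)+2^{k-\omega(h)}.
\]
We only need the cruder fact that $h\ge 2\cdot3\cdot5\cdots$ (the product of the first $\omega(h)$ primes, up to the presence of $2$). Consequently the total mass $C_i=\sum_{\omega(h)=i}c(h)$ is concentrated on small levels $i$ (roughly $\log\log$ size), while $B_i=\binom{k}{i}$ is concentrated near $i=k/2$.

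Second, I would use the normalized matching (LYM) property of the Boolean lattice. For an up-set $F$, the proportion $f_i=|F_i|/\binom ki$ of level $i$ lying in $F$ is nondecreasing in $i$. If $c$ were constant on levels, Hall's inequality would read $\sum_i f_i(C_i-B_i)\le0$. Since a nondecreasing $f$ with values in $[0,1]$ is a convex combination of indicators of tails $[t,k]$, it would suffice to check
\[
\sum_{i\ge t}C_i\le\sum_{i\ge t}\binom ki\quad\text{for every }t.
\]
This is a one-variable numerical check, and the requirement $k\ge45$ should enter precisely here. For large $t$ the left side is tiny, since an integer $j\le 2^k$ cannot be divisible by $t$ distinct primes when the product of the first $t$ primes exceeds $2^k$. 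For small $t$ both sides are near $2^k$, and the slack comes from integers $j$ sharing a factor with $n$ being counted with multiplicity less than the full binomial mass.

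The main obstacle is that $c(h)$ is not constant on levels: an up-set can preferentially contain the heavy elements of a level, namely those $h$ built from the smallest primes $2,p_2,\dots$. I would handle this by replacing $c(h)$ with a level-uniform majorant in a weighted LYM inequality. Concretely, I would attach to each $h$ the weight $\min(c(h),\text{cap})$ and argue separately about the few $h$ with $c(h)$ above the cap. These are the $h$ built from small primes and of small level, and any up-set containing such an $h$ also contains all $2^{k-\omega(h)}$ of its multiples. This comparison, $N/h$ against $2^{k-\omega(h)}$ (that is, $h$ against $2^{\omega(h)}$, which is tight only at $h=2$), should absorb the excess.

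Making these constants work uniformly is where I expect the real effort, and where the threshold $45$ would be determined numerically.
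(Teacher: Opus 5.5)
Your reduction to even $n$ via Lemma~\ref{lem:half} is correct. So is your reformulation of Hall's condition as $\sum_{h\in F}c(h)\le|F|$ for up-closed $F\subseteq D(n)$, and so is the normalized-matching reduction for weights that are constant on levels. But these only handle the easy part.

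\textbf{The tail check is not where the difficulty lies.} The tail inequality is equivalent to $C_{<t}\ge\sum_{i<t}\binom{k}{i}$. It is vacuous unless $P_1P_2\cdots P_t\le 2^k$, with $P_i$ the $i$-th prime, since otherwise $C_{\ge t}=0$. For such $t$ the right side is exponentially smaller than $2^k$, while $C_{<t}\ge C_0\gg 2^k/\log k$. So the threshold $45$ is not decided there, contrary to your expectation.

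\textbf{The open step is the whole theorem.} Everything rests on the step you leave open: the non-uniformity of $c$ within a level. This is not a small perturbation. For $n=2\cdot3\cdots P_k$ one has $c(2)\approx 2^{k-1}\prod_{2<p\mid n}(1-1/p)$, which exceeds the level-one average $C_1/k$ by a factor of order $k/\log\log k$. Moreover, the elements divisible by the smallest primes carry most of the mass on the low levels. So the heavy elements are not ``few'', and the proposal gives no bound for $\sum_{h\in F}c(h)$ when $F$ is generated by them.

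\textbf{The tools you name cannot supply this bound.} Write $U_h=\{h'\in D(n):h\mid h'\}$.
\begin{enumerate}
\item The sieve bound is vacuous. Its error term $2^{k-\omega(h)}$ equals $|U_h|$, the entire Hall budget that $h$ brings.
\item The bound $c(h)\le N/h$ cannot be summed over up-sets. The sum $\sum_{p\mid n}1/p$ exceeds $1$ (about $1.94$ for $n=2\cdot3\cdots197$), so $\sum_{p\mid n}N/p>N$ already on level one.
\item The comparison $\lfloor N/h\rfloor\le|U_h|$ handles only one generator at a time, and it fails to combine in several ways:
\begin{itemize}
\item It has no slack at $h=2$, where $\sum_{U_2}c=|U_2|=N/2$ exactly.
\item It spends the same budget $|U_h|$ that your capped LYM estimate is already using.
\item For a set $G$ of heavy generators, you must compare $\#\{j\le N: g\mid j\text{ for some }g\in G\}$ with $|\bigcup_{g\in G}U_g|$. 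You must then treat the rest of $F$ inside $D(n)\setminus\bigcup_{g\in G}U_g$, which is a general down-set and in general lacks the normalized matching property.
\end{itemize}
\end{enumerate}
Controlling these inclusion--exclusion errors uniformly over all up-sets is the real content of the theorem.

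\textbf{How the paper gets this control.} It removes the smallest primes from the count. Writing your number as $u$, it splits off the product $m_j$ of the $j\approx\sqrt{\ell}$ smallest primes and sets $n=u/m_j$. By Lemma~\ref{lem:partition} it cuts $[1,\tau(u)]$ into $2^j$ blocks $A_{v,j}$. Each block is coprime to $v$ and is a $2^{j-1}$-AP combination, so by Lemma~\ref{lem:APcomb-error} every $d\mid n$ has $\tau(n)/d+O(2^{j-1})$ multiples in each block. It then matches $D(n)$ to each block by Hall's theorem. Now $f(n)=\sum_{p\mid n}1/p<0.93$. So for $S$ in a block whose elements all share at least $i$ primes with $n$, the union bound $|S|\le\tau(n)f(n)^i/i!+2^{j-1}\binom{\omega(n)}{i}$ beats $|N(S)|\ge\tau(n)/2^i$. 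For $i\le3$ this needs inclusion--exclusion refinements according to how many distinct minimal gcds occur in $S$. The number $45$ is simply where these explicit error terms become small enough.
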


We now introduce a notion that will be used to track error bounds in counting.

\begin{definition}\label{def:APcomb}
For an integer $k\geq 1$, we say that a set $S$ of integers is a \emph{$k$-AP combination} if it can be constructed as follows:
\begin{enumerate}
\item A single arithmetic progression is a $1$-AP combination.
\item If $S_1$ is a $k_1$-AP combination, $S_2$ is a $k_2$-AP combination, and $S_1\cap S_2=\emptyset$, then $S_1\cup S_2$ is a $(k_1+k_2)$-AP combination.
\item If $S_1$ is a $k_1$-AP combination, $S_2\subseteq S_1$ is a $k_2$-AP combination, then
$S_1\setminus S_2$ is a $(k_1+k_2)$-AP combination.
\end{enumerate}
\end{definition}

Note that since we don't assume the constituent arithmetic progressions are nonempty, any set
$S$ that is a $k$-AP combination is also a $k'$-AP combination for any $k'>k$.

\begin{lemma}\label{lem:APcomb-error}
If $S$ is a $k$-AP combination whose constituent arithmetic progressions all have common differences coprime to $d$, then the number of elements of $S$ divisible by $d$ is $|S|/d+\theta$ where $|\theta|\le k$.
\end{lemma}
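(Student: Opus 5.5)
Argue by structural induction on the construction of $S$ in Definition \ref{def:APcomb}. For each $k$-AP combination $S$, let $N_d(S)$ be the number of elements of $S$ divisible by $d$, and set $\theta(S)=N_d(S)-|S|/d$. The goal is $|\theta(S)|\le k$ for every representation of $S$ as a $k$-AP combination, where all constituent progressions have common differences coprime to $d$. Since $N_d$ and $|\cdot|$ are both additive over disjoint unions, $\theta$ is additive as well, and that drives the inductive steps.

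\emph{Base case.} Let $S=\{a,a+m,\dots,a+(L-1)m\}$ be a single (possibly empty) arithmetic progression with $\gcd(m,d)=1$. Because $m$ is invertible modulo $d$, the condition $d\mid a+jm$ holds exactly when $j\equiv j_0\pmod d$ for a single residue class $j_0$. So among the indices $j\in[0,L-1]$, the count is either $\lfloor L/d\rfloor$ or $\lceil L/d\rceil$. Either way it differs from $L/d$ by less than $1$, giving $|\theta(S)|<1\le 1$. The degenerate case $m=0$ is covered if we regard $\gcd(0,d)=1$ as forcing $d=1$, where everything is trivial; otherwise it is excluded by hypothesis.

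\emph{Inductive steps.} If $S=S_1\cup S_2$ with the union disjoint, then $\theta(S)=\theta(S_1)+\theta(S_2)$, so $|\theta(S)|\le k_1+k_2$. If $S=S_1\setminus S_2$ with $S_2\subseteq S_1$, then $|S|=|S_1|-|S_2|$ and $N_d(S)=N_d(S_1)-N_d(S_2)$. Hence $\theta(S)=\theta(S_1)-\theta(S_2)$, and again $|\theta(S)|\le k_1+k_2$. In both cases the constituent progressions of $S_1$ and $S_2$ are among those of $S$, so they also have differences coprime to $d$, and the induction hypothesis applies.

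There is no real obstacle. The only care needed is in the base case, namely the residue-class count for a progression whose difference is coprime to $d$, including empty progressions (where $\theta=0$). We should also note that the bound is applied to a fixed construction tree, so the hypothesis on the differences refers to the progressions used in that construction.
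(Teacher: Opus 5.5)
Your proposal is correct and follows essentially the same route as the paper: structural induction on the construction, a single-progression base case, and additivity of $\theta$ under disjoint unions and nested differences. Your base case counts the indices in one residue class modulo $d$ to get $\lfloor L/d\rfloor$ or $\lceil L/d\rceil$, which also covers progressions shorter than $d$; the paper's proof leaves that case implicit, since it only treats progressions of length $m\ge d$.
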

\begin{proof}
We proceed by induction on the construction of $S$. For the base case, let $S$ be an arithmetic progression of length $m$ with common difference $q$ where $\gcd(d,q)=1$. If $m\geq d$, the elements of $S$ form a complete residue system modulo $d$, and among any $d$ consecutive terms, exactly one is divisible by $d$. So, the count of $d$-multiples is $m/d+\theta$ with $|\theta|
\le 1$.

For the inductive step, suppose $S=S_1\cup S_2$ with $S_1\cap S_2=\emptyset$. Then the count of $d$-multiples in $S$ equals
\[(|S_1|/d+\theta_1)+(|S_2|/d+\theta_2)=|S|/d+(\theta_1+\theta_2),\]
where $|\theta_1|\le k_1$, $|\theta_2|\le k_2$, so $|\theta_1+\theta_2|\le k_1+k_2$.

If $S_2\subseteq S_1$ and $S=S_1\setminus S_2$, then the count of $d$-multiples in $S$ equals
\[(|S_1|/d+\theta_1)-(|S_2|/d+\theta_2)=|S|/d+(\theta_1-\theta_2),\] where $|\theta_1-\theta_2|\le k_1+k_2$.
\end{proof}

\begin{lemma} \label{lem:partition}
Suppose $p_1<p_2<\cdots <p_j$ are primes with $p_1=2$ and $I$ is an interval of
$L$ consecutive integers where $2^j \mid L$ and $L\ge 4^j$. Then $I$ can be partitioned into
$2^j$ sets $A_{v,j}$ of size $L/2^{j}$,
parametrized by divisors $v$ of $m_j:=p_1p_2\dots p_j$, such that every member of $A_{v,j}$ is coprime to $v$.
 Moreover, each $A_{v,j}$ is a $2^{j-1}$-AP combination whose constituent AP's have common differences dividing $m_j$, and hence for any $d$ coprime to $m_j$, the number of elements of $A_{v,j}$ divisible by $d$ is within $2^{j-1}$ of $|A_{v,j}|/d$.
\end{lemma}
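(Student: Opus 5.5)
The plan is to induct on $j$. At each step, every set $A_{v,j-1}$ is split into two halves: one half is made of integers coprime to the new prime $p_j$, and that half becomes $A_{vp_j,j}$. The cut is made with a threshold so that the AP-combination structure is preserved, and Lemma~\ref{lem:APcomb-error} guarantees there are enough non-multiples of $p_j$ to fill that half.

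\emph{Base case $j=1$.} Here $m_1=2$, and $L$ is even. Let $A_{2,1}$ be the odd members of $I$ and $A_{1,1}$ the even members. Each is a single AP with difference $2$, so each is a $1$-AP combination, and each has size $L/2$. Every member of $A_{2,1}$ is coprime to $2$.

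\emph{Inductive step.} Assume we have the partition for $j-1$: sets $A_{v,j-1}$ for $v\mid m_{j-1}$, each of size $N:=L/2^{j-1}$, each a $2^{j-2}$-AP combination with differences dividing $m_{j-1}$. Let $p=p_j$ and fix $v\mid m_{j-1}$. Since $p\nmid m_{j-1}$, Lemma~\ref{lem:APcomb-error} shows that $A_{v,j-1}$ has at most $N/p+2^{j-2}$ multiples of $p$. Hence it has at least $N(1-1/p)-2^{j-2}$ non-multiples of $p$. Since $p\ge3$ and $N\ge 4^j/2^{j-1}=2^{j+1}$, we get
\[
N(1-1/p)-2^{j-2}\ \ge\ N/2+N/6-2^{j-2}\ \ge\ N/2 .
\]
For a threshold $t$, let $B_t=A_{v,j-1}\cap(-\infty,t]$. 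As $t$ increases, the number of non-multiples of $p$ in $B_t$ rises by at most one at a time. It reaches at least $N/2$, and $N/2$ is an integer because $2^j\mid L$. So we can choose $t$ with exactly $N/2$ non-multiples of $p$ in $B_t$. Define
\[
A_{vp,j}=B_t\setminus (B_t\cap p\mathbb Z),\qquad A_{v,j}=\bigl(A_{v,j-1}\cap(t,\infty)\bigr)\cup (B_t\cap p\mathbb Z).
\]
These two sets partition $A_{v,j-1}$ and each has size $N/2=L/2^j$. Members of $A_{vp,j}$ are coprime to $v$ by induction and coprime to $p$ by construction, so they are coprime to $vp$. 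Members of $A_{v,j}$ remain coprime to $v$.

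\emph{Structural bookkeeping.} This is the step I expect to need the most care. Intersecting a $k$-AP combination with a fixed set $X$ gives a $k$-AP combination: intersect each constituent AP with $X$, since unions, set differences and the subset relation all commute with intersection. An AP intersected with an interval is again an AP. An AP with difference $q\mid m_{j-1}$ intersected with $p\mathbb Z$ is an AP with difference $qp\mid m_j$, because $\gcd(q,p)=1$. Therefore $B_t$, $B_t\cap p\mathbb Z$ and $A_{v,j-1}\cap(t,\infty)$ are all $2^{j-2}$-AP combinations with differences dividing $m_j$.

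It follows that $A_{vp,j}$ is a $2^{j-1}$-AP combination, as a difference of a $2^{j-2}$-combination and a $2^{j-2}$-subcombination. Likewise $A_{v,j}$, a disjoint union of two $2^{j-2}$-combinations, is a $2^{j-1}$-AP combination. Writing $A_{v,j}$ as this union, rather than as the complement $A_{v,j-1}\setminus A_{vp,j}$, is what keeps the complexity at $2^{j-1}$ instead of $3\cdot2^{j-2}$. Finally, if $d$ is coprime to $m_j$, then $d$ is coprime to every common difference, and the last assertion of Lemma~\ref{lem:partition} follows from Lemma~\ref{lem:APcomb-error}.
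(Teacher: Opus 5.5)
Your proof is correct and follows the paper's argument essentially step for step. You induct on $j$ and split each $A_{v,j-1}$ at a threshold: the non-multiples of $p_j$ below the cut form $A_{vp_j,j}$, a difference of two $2^{j-2}$-AP combinations, and the remainder forms $A_{v,j}$, a disjoint union of two such combinations, with Lemma~\ref{lem:APcomb-error} guaranteeing enough non-multiples. The only differences from the paper are cosmetic: the paper writes out $j=2$ separately, while you fold it into the general step (valid since $2^{j-2}=1$ there), and you are more explicit about the existence of the cutoff and about the new common differences $qp_j$ dividing $m_j$.
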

\begin{proof}
The error bound of $2^{j-1}$ follows from Lemma \ref{lem:APcomb-error}.
We prove the existence of the stated partition by induction on $j$.

For $j=1$, since $p_1=2$ by assumption, we partition $I$ into $A_{1,1}$, the even integers in $I$, and $A_{2,1}$, the odd integers. Each is a single arithmetic progression, hence a $1$-AP combination.

For $j=2$, let $p=p_2$ be the second prime. We construct the four sets as follows. Let $x$ be chosen so that the number of odd integers in $I\cap[1,x]$ that are not divisible by $p$ equals $L/ 4$. To see that such an $x$ exists, note that the majority of odd integers in $I$ are not divisible by $p$.
Indeed, the number of odd elements of $I$ divisible by $p$ is $\le L/(2p)+1$, and this is $< L/4$ by the assumption $L\ge4^j$. Set
\begin{align*}
A_{2p,2} &= \{\text{odd } i\in I : i\le x,\, p\nmid i\},\\
A_{2,2} &= \{\text{odd } i\in I : p\mid i,\, i\le x\}\cup\{\text{odd } i\in I : i>x\}.
\end{align*}
Similarly, let $y$ be chosen so that the number of even integers in $I\cap[1,y]$ not divisible by $p$ equals $L/4$, and set
\begin{align*}
A_{p,2} &= \{\text{even } i\in I : i\le y,\, p\nmid i\},\\
A_{1,2} &= \{\text{even } i\in I : p\mid i,\, i\le y\}\cup\{\text{even } i\in I : i>y\}.
\end{align*}
Each of $A_{2p,2}$ and $A_{p,2}$ is the difference of two arithmetic progressions, hence a $2$-AP combination by rule (3). Each of $A_{2,2}$ and $A_{1,2}$ is the disjoint union of two arithmetic progressions, hence a $2$-AP combination by rule (2). Thus each set is a $2$-AP combination.

For $j\ge 3$, assume the result holds for $j-1$: each $A_{v,j-1}$ is a $2^{j-2}$-AP combination. For each $v\mid m_{j-1}$, we partition $A_{v,j-1}$ into $A_{v,j}$ and $A_{p_jv,j}$ using a cutoff as follows. Let $z_v$ be chosen so that the number of elements of $A_{v,j-1}$ that are $\le z_v$ and not divisible by $p_j$ equals $L/2^j$. Set
\begin{align*}
A_{p_jv,j} &= \{i\in A_{v,j-1}: i\le z_v,\, p_j\nmid i\},\\
A_{v,j} &= \{i\in A_{v,j-1}: p_j\mid i,\, i\le z_v\}\cup\{i\in A_{v,j-1}: i>z_v\}.
\end{align*}
Every element of $A_{p_jv,j}$ is coprime to $p_j$ (and was already coprime to $v$), so is coprime to $p_jv$. Every element of $A_{v,j}$ was already coprime to $v$.

We now verify the sizes. By the induction hypothesis, the number of $p_j$-multiples in $A_{v,j-1}$ is within $2^{j-2}$ of $|A_{v,j-1}|/p_j=L/(p_j2^{j-1})$, hence at most
\[
\frac{L}{p_j2^{j-1}}+2^{j-2}\le \frac{L}{ 3\cdot2^{j-1}}+\frac{L}{ 2^{j+2}}<\frac{L}{2^j},
\]
using $p_j\ge 3$ and $L\ge 4^{j}$. Thus there are enough non-$p_j$-multiples in $A_{v,j-1}$ to fill
$A_{p_jv,j}$ to size $L/2^j$, and $A_{v,j}$ receives the remaining $L/2^j$ elements.

It remains to show each new set is a $2^{j-1}$-AP combination. By induction, we know that each set $A_{v,j-1}$ is a $2^{j-2}$-AP combination.

A key observation is that intersecting with $\{i:i\le z_v\}$ or $\{i:i>z_v\}$ preserves the AP-combination structure of a set. If $T$ is a $k$-AP combination, then $\{i\in T:i\le z_v\}$ and $\{i\in T:i>z_v\}$ are each $k$-AP combinations. (We simply truncate each constituent arithmetic progression. This could result in some of them being empty.) Similarly, restricting to $p_j$-multiples preserves the AP-combination structure, since if $T$ is a $k$-AP combination, then $\{i\in T:p_j\mid i\}$ is a $k$-AP combination (just replace each arithmetic progression by the sub-arithmetic progression of its $p_j$-multiples).

Now consider each of the sets used to construct $A_{v,j}$ and $A_{p_jv,j}$.
First, $\{i\in A_{v,j-1}:p_j\mid i,\,i\le z_v\}$ is just $A_{v,j-1}$ restricted to $p_j$-multiples and then to $\{i\le z_v\}$. So, it has the same combination structure as $A_{v,j-1}$, hence is a $2^{j-2}$-AP combination.
Similarly $\{i\in A_{v,j-1}:i>z_v\}$ is $A_{v,j-1}$ restricted to $\{i>z_v\}$, hence also a $2^{j-2}$-AP combination. Since $A_{v,j}$ is the disjoint union of these two it is a $2^{j-1}$-AP combination.

For $A_{p_jv,j}$, we note that
\begin{align*}
A_{p_jv,j}  &= \{i\in A_{v,j-1}:p_j\nmid i,\,i\le z_v\} \\
&= \{i\in A_{v,j-1}:i\le z_v\}\setminus\{i\in A_{v,j-1}:p_j\mid i,\,i\le z_v\}.
\end{align*}
As argued above, each of these sets is a $2^{j-2}$-AP combination, and the latter is a subset of the former, so their difference is a $2^{j-1}$-AP combination.
This completes the induction.
\end{proof}

\begin{proof}[Proof of Theorem \ref{th:sqfr}]
 Let $u$ be a squarefree number satisfying the hypotheses with $\ell=\omega(u)$. If $u$ is odd, then $2u$ is even with $\omega(2u)=\ell+1\ge46$ prime factors. If we can show that $2u$ is matchable, then $u$ is matchable by Lemma \ref{lem:half}. Thus we may assume $u$ is even, so that $2=p_1<p_2<\cdots<p_\ell$ where $u=p_1p_2\cdots p_\ell$.

 For an integer $j\le\omega(u)/2$ to be chosen later, we let $m_j=p_1p_2\cdots p_j$ and $n = u/m_j$, so that $m_j$ is the product of the $j$ smallest primes dividing $u$ while $n$ contains the rest. We then apply Lemma \ref{lem:partition} using these $j$ smallest prime factors and
$I=[1,\tau(u)]=[1,2^{j+\omega(n)}]$. Since $4^{j} \le \tau(u)$, the lemma allows us to produce a partition of $I$ into sets $A_{v,j}$ as described in the lemma, each having size $\tau(n)$.

 Thus it now suffices to show that there are one-to-one correspondences between $D(n)$ and each of the sets $A_{v,j}$
with corresponding numbers relatively prime. Indeed, for $v\midd m_j$, the correspondence can instead be between $A_{v,j}$ and $vD(n)$, keeping the coprime property. Then, as $D(u)$ is the disjoint union of the sets $vD(n)$ as
$v$ runs over all of the divisors of $m_j$, we can piece together
these matchings and so have a coprime matching of all divisors of $u=m_jn$ to $[1,\tau(u)]$.

 To show the existence of these one-to-one correspondences we note that any divisor $d\mid n$ is coprime to $m_j$, so by Lemma \ref{lem:partition}, the number of integers in $A_{v,j}$ divisible by
$d$ is within $2^{j-1}$ of $\tau(n)/d$.

We choose $j$ as follows. For $\ell \ge68$ we take $j=\lfloor\sqrt{\ell}\rfloor$ and for
$45\le\ell\le67$
we take $j=4$ except $j=3$ when $\ell\in\{46,47,48\}$ and $j=5$ when $\ell=52$. Note that in every case we have $j\le\sqrt\ell$.
With these choices, $\omega(n) = \ell - j \geq 41$ and one can verify that
\begin{equation}
f(n) :=\sum_{p\,|\,n}\frac1p= \sum_{i=j+1}^{\ell} \frac{1}{p_i} \leq \sum_{i=j+1}^{\ell} \frac{1}{P_i} < \frac{93}{100},
\label{eq:primesumbound}
\end{equation}
where $P_i$ is the $i$-th prime. To see this in the case that $j=\lfloor\sqrt\ell\rfloor$, we use that $P_i>i\log i$ (see \cite{R}), so 
\[
f(n)<\frac1{\sqrt{68}\log\sqrt{68}}+\int_{\sqrt\ell}^\ell\frac{dt}{t\log t}<0.06+\log2<0.76.
\]

We use Hall's theorem for the bipartite graph on $A=A_{v,j}$ to $D(n)$ where there is an edge precisely when $a\in A$ and $d\midd n$ are coprime.
Suppose that $S\subset A$ and that $s\in S$ minimizes $k=\omega((s,n))$.
We wish to show that $|S|$ is bounded above by the size of the neighborhood $N(S)$ of $S$. If $k=0$ then the neighborhood of $S$ is all of $D(n)$, so the condition holds.

Assume that $k\ge1$.  Since each element $s$ of $S$ is in $A\subset[1,\tau(u)]$ and is
divisible by some $d\mid n$ with $\omega(d)=k$, we have
\[
|S|\le\sum_{\substack{d\,|\,n\\\omega(d)=k\\d\leq \tau(u)}}\sum_{\substack{a\in A\\d\,|\,a}}1
\le\sum_{\substack{d\,|\,n\\\omega(d)=k}}\left(\frac{\tau(n)}d+2^{j-1}\right).
\]
By the multinomial theorem, we have
\[
\sum_{\substack{d\,|\,n\\\omega(d)=k}}\frac1d\le\frac{f(n)^k}{k!},
\]
so that
\begin{align} |S| &\le\tau(n)\frac{f(n)^k}{k!}+2^{j-1}\binom{\omega(n)}{k} \notag \\
&< \frac{\tau(n)}{2^{k}}\left(\frac{(93/50)^k}{k!}+2^{2j-1+k-\ell + \log_2{\binom{\omega(n)}{k}}}\right).
\label{eq:keq2Sbound}
\end{align}
Since the primes dividing $n$ are all at least $P_{j+1}$, a divisor $d$ of $n$ with $\omega(d)=k$ satisfies $d \geq P_{j+1}P_{j+2}\cdots P_{j+k}$. For such a $d$ to divide any element of
$A\subset [1,\tau(u)]$, we need $d \leq \tau(u) = 2^\ell$, as we have seen, so we only need consider $k \leq
\overline{k}$, where $\overline{k}$ is the largest integer with $P_{j+1} P_{j+2} \cdots P_{j+
\overline{k}} \leq 2^\ell$.

\subsection{Case $k\ge4$.} For Hall's condition to hold, since the neighborhood of $S$ has size at least $\tau(n)/2^k$, it suffices if
\begin{equation}\label{eq:kge4constraint}
\frac{(93/50)^k}{k!}+2^{2j-1+k-\ell + \log_2\binom{\omega(n)}{k}} < 1
\end{equation}
for each $k$ with $4 \leq k \leq \overline{k}$. Note that $(93/50)^4/24 < 0.50$, so we need only
show that the second term is at most $\frac12$.

Let $E_k:= 2j-1+k-\ell+\log_2\binom{\omega(n)}k$ denote the exponent, so the second term equals $2^{E_k}$.

\textit{Large $\ell$ ($\ell \geq 192$).} Set $j = \lfloor\sqrt{\ell}\rfloor$. We first bound $\overline{k}
$. Since $j = \lfloor\sqrt{\ell}\rfloor \geq 13$ for $\ell\ge192$, we have $P_{j+1} \geq P_{14} = 43$. The constraint $\prod_{i=1}^{k} P_{j+i} \leq 2^\ell$ implies $43^k < 2^\ell$, giving $\overline{k}
<\ell/\log_243 < 0.185\ell$. With $\omega(n)= \ell - j > \ell - \sqrt{\ell}$, we have $\overline{k}/
\omega(n)< 0.20$ for $\ell \geq 192$.

Using the entropy function $H(x)=-x\log_2 x-(1-x)\log_2(1-x)$ and the standard bound $\log_2\binom{a}{b}<a\cdot H(b/a)$, writing $c=k/\omega(n)<0.20$ and using that $H$ is increasing on $(0,\tfrac{1}{2})$, we have $H(c)<H(0.20)<0.722$, so $c+H(c)<0.922$.
Thus
\begin{align*}
E_k&\le 2j-1+(c+H(c))\omega(n)-\ell\\ &<2\sqrt{\ell}-1+0.922(\ell-\sqrt{\ell})-\ell\\ &=-0.078\ell+1.078\sqrt{\ell}-1<-1.03
\end{align*}
for $\ell\ge192$. Since $(93/50)^4/4!<0.50$ and $2^{E_k}<0.50$, we have
\eqref{eq:kge4constraint} for $4\le k\le\overline{k}$.

\textit{Moderate $\ell$} ($68 \leq \ell < 192$). We use $j = \lfloor\sqrt{\ell}\rfloor$ and verify
\eqref{eq:kge4constraint} by direct computation. For each $\ell$ in this range, we find \[\overline{k} = \max\{k:\prod_{i=j+1}^{j+k} P_i <2^\ell\}.\] Then, for each $k$ with $4 \leq k \leq
\overline{k}$, we compute $E_k = k + 2j-1 - \ell + \log_2\binom{\ell-j}{k}$ and verify that
$(93/50)^k/k! + 2^{E_k} < 1$.

\textit{Small $\ell$} ($45 \leq \ell \leq 67$). Here we take (as above) $j=4$ when $\ell=45$, $j=3$ for $46\le\ell\le48$, $j=4$ for $49\le\ell\le67$ (except $j=5$ when $\ell=52$). As above, we verify
\eqref{eq:kge4constraint} directly for all $4\leq k \leq \overline{k}$. This concludes the case
$k\ge4$ for all $\ell\ge45$.

\subsection{Case $k=3$.} Here the neighborhood of $S$ has at least $\tau(n)/8$ elements. Let
$a_1\in S$ with $(a_1,n)=q_1q_2q_3$ for distinct primes $q_1,q_2,q_3$ dividing $ n$.

First, suppose that the triple $\{q_1,q_2,q_3\}$ is unique (every $a\in S$ has $(a,n)=q_1q_2q_3$ or $\omega((a,n))>3$). Then
\begin{align*}
|S| &\leq \sum_{\substack{a\in A\\q_1q_2q_3\mid a}}1 + \sum_{\substack{d\mid n\\\omega(d)=4}}
\left(\frac{\tau(n)}{d}+2^{j-1}\right)\\
&\leq \frac{\tau(n)}{q_1q_2q_3}+2^{j-1} + \tau(n)\frac{f(n)^4}{24} + 2^{j-1}\binom{\omega(n)}{4}\\ &< \frac{\tau(n)}{8}\left(\frac{8}{q_1q_2q_3}+\frac{8f(n)^4}{24}+ 2^{2\sqrt{\ell}+2-\ell}\left(1+
\binom{\ell}{4}\right)\right),
\end{align*}
using that $\tau(n)=2^{\ell-j}$.  Since $q_1q_2q_3 \geq 7\cdot11\cdot13 = 1001$ for $j=3$ and $f(n)<0.93$, the expression in
parentheses is at most $8/1001 + 8(0.93)^4/24 + 2^{2\sqrt{\ell}+2-\ell}\binom{\ell}{4} < 0.26 + 2^{2\sqrt{\ell}+2-\ell}\binom{\ell}{4}$, which is $<1$ when $\ell\ge27$.

Now suppose $S$ contains at least 2 elements $a_1,a_2$ with $\omega((a_i,n))=3$, say $ (a_1,n)=q_1q_2q_3$ and $(a_2,n)=q_4q_5q_6$, with $\{q_1,q_2,q_3\}\ne\{q_4,q_5,q_6\}$. The two triples share at most 2 primes, so by inclusion-exclusion $|N(S)|\ge 2\cdot\tau(n)/8 - \tau(n)/16
= 3\tau(n)/16$. Also,
\begin{align*}
|S| &\leq \tau(n)\frac{f(n)^3}{6} + 2^{j-1}\binom{\omega(n)}{3}\\
&< \frac{3\tau(n)}{16}\left(\frac{16f(n)^3}{18} + \frac{16}{3}\cdot 2^{2\sqrt{\ell}-1-\ell}\binom{\ell}
{3}\right).
\end{align*}
Since $f(n)<0.93$, we have $16(0.93)^3/18 < 0.72$, and $\frac{16}{3}\cdot 2^{2\sqrt{\ell}-1-\ell}
\binom{\ell}{3}<0.28$ for $\ell\ge 25$. This completes the case $k=3$.

\subsection{Case $k=2$.} Here the neighborhood of $S$ has at least $\tau(n)/4$ elements. Let
$a_1 \in S$ with $(a_1,n)=q_1q_2$ for distinct primes $q_1,q_2 \mid n$.

First, suppose that the pair $\{q_1,q_2\}$ is unique (every $a\in S$ has
$(a,n)=q_1q_2$ or $\omega((a,n))>2$). Then
\begin{align*}
|S| &\leq \sum_{\substack{a\in A\\q_1q_2\mid a}}1 + \sum_{\substack{d\mid n\\\omega(d)=3}}
\left(\frac{\tau(n)}{d}+2^{j-1}\right)
\leq \frac{\tau(n)}{q_1q_2}+2^{j-1} + \tau(n)\frac{f(n)^3}{6} + 2^{j-1}\binom{\omega(n)}{3}\\
&< \frac{\tau(n)}{4}\left(\frac{4}{q_1q_2}+\frac{4f(n)^3}{6}+ 2^{2\sqrt{\ell}+1-\ell}\left(1+\binom{\ell}
{3}\right)\right).
\end{align*}
Since $q_1q_2 \geq 77$ for $j \geq 3$ and $f(n)<0.93$, we have $4/77 + 4(0.93)^3/6 < 0.59$, and
$2^{2\sqrt{\ell}+1-\ell}(1+\binom{\ell}{3})<0.33$ for $\ell \geq 23$.

Now suppose $S$ contains 2 elements $a_1,a_2$ with $\omega((a_i,n))=2$, say $(a_1,n)=q_1q_2$ and $(a_2,n)=q_3q_4$, with $\{q_1,q_2\}\ne \{q_3,q_4\}$.
Assume also that every other $a\in S$ has either $(a,n)=q_1q_2$, $(a,n)=q_3q_4$ or $\omega((a,n))\ge3$.
 The pairs share at most one prime, so by inclusion-exclusion $|N(S)|\ge 2\cdot\tau(n)/4 - \tau(n)/ 8 = 3\tau(n)/8$.

Also,
\begin{align*}
|S| &\leq \sum_{\substack{a\in A\\q_1q_2\mid a}}1 + \sum_{\substack{a\in A\\q_3q_4\mid a}}1 +
\sum_{\substack{d\mid n\\\omega(d)\ge 3}}\left(\frac{\tau(n)}{d}+2^{j-1}\right)\\
&< \frac{3\tau(n)}{8}\left(\frac{8}{3}\cdot\frac{2}{77}+\frac{8f(n)^3}{18}+ \frac{8}{3}\cdot 2^{2\sqrt{\ell}-1-\ell}\left(2+\binom{\ell}{3}\right)\right).
\end{align*}
Since $f(n)<0.93$, we have $16/(3\cdot 77) + 8(0.93)^3/18 < 0.43$, and $\frac{8}{3}\cdot 2^{2\sqrt{\ell}-1-\ell}(2+\binom{\ell}{3})<0.5$ for $\ell\ge21$.

So now assume that there are at least 3 different values of $(a,n)$ for $a\in S$ with exactly 2 prime factors. If the 3 values are $q_1q_2,q_3q_4,q_5q_6$, then the case when one prime is shared among all 3 numbers gives the smallest size for $N(S)$ and that size is $(7/16)\tau(n)$. Then
\[
|S|<\frac{7\tau(n)}{16}\left(\frac{16f(n)^2}{14}+\frac{16}7\cdot2^{2\sqrt{\ell}-1-\ell}\binom{\omega(n)} 2\right)<\frac{7\tau(n)}{16}(0.99+0.01),
\]
for $\ell\ge26$, completing the case $k=2$.

\subsection{Case $k=1$.} Here the neighborhood of $S$ has at least
 $\tau(n)/2$ elements, so we may assume that $|S| \geq \tau(n)/2$. Let $a_1 \in S$ with $ (a_1,n)=q_1$ for some prime $q_1 | n$.

 First, suppose that $q_1$ is unique (every $a\in S$ has $(a,n)=q_1$ or $\omega((a,n))\ge2$). Then
\begin{align*}
|S|&\le \sum_{\substack{a\in A\\q_1\mid a}}1+
\sum_{\substack{a\in A,\,q_1\nmid a\\\omega((a,n))\ge2}}1
\le \frac{\tau(n)}{q_1}+2^{j-1}+\sum_{\substack{d\mid n,\,q_1\nmid d\\\omega(d)=2}}
\left(\frac{\tau(n)}d+2^{j-1} \right)\\
&\le\frac{\tau(n)}{q_1}+2^{j-1}+\tau(n)\frac{(f(n)-1/q_1)^2}2+2^{j-1}\binom{\omega(n)-1}2\\ &<\tau(n)\left(\frac{f(n)^2}{2}+\frac{1-f(n)}{q_1}+\frac{1}{2q_1^2}\right)+2^{j-1}\left(1+
\binom{\omega(n)-1}2\right).
\end{align*}
Using $f(n) < 93/100$ and $q_1\geq P_{j+1} \geq 7$ for $j\ge3$, the coefficient of $\tau(n)$ is at most
$0.432 + 0.07/7 + 0.5/49 < 0.453$. Since $\tau(n)=2^{\ell-j}$,
\begin{align*}
|S|&<0.453\tau(n)+2^{j-1}\left(1+\binom{\omega(n)-1}2\right)\\ &\le \frac{\tau(n)}2\left(0.906+2^{2\sqrt{\ell}-\ell}\left(1+\binom{\ell-4}2\right)\right).
\end{align*}
This is $<\tau(n)/2$ when $\ell\ge30$.

Now suppose $S$ contains elements $a_i$ with $(a_i,n)=q_i$ for $r\ge2$ distinct
primes $q_1,\dots,q_r$ dividing $n$. The neighborhood of $S$ contains all divisors of $n$ coprime to at least one of $q_1,\dots,q_r$.

If $r=2$, this neighborhood has cardinality $\frac34\tau(n)$. We have
\begin{align*}
|S|&\le\sum_{i=1}^2\left(\frac{\tau(n)}{q_i}+2^{j-1}\right)+\sum_{\substack{d\midd n\\\omega(d)=2}}
\left(\frac{\tau(n)}d+2^{j-1}\right)\\
&<\tau(n)\left(\frac1{q_1}+\frac1{q_2}+\frac{f(n)^2}2\right)+2^{j-1}\left(2+\binom{\omega(n)}2\right).
\end{align*}
Since $q_1,q_2\ge P_{j+1}$ and $f(n)<93/100$, the first term is at most $2/7 + 0.433 < 0.72\tau(n)
$ for $j\ge3$. Thus,
\[
|S|<\frac{3\tau(n)}4\left(0.96+\frac{4}3\cdot 2^{2\sqrt{\ell}-1-\ell}\left(2+\binom{\ell-3}2\right)\right).
\]
This is $<\frac34\tau(n)$ when $\ell\ge22$.

If $r=3$, the neighborhood has cardinality at least $\frac78\tau(n)$, and
\begin{align*}
|S|&<\tau(n)\left(\frac1{q_1}+\frac1{q_2}+\frac1{q_3}+\frac{f(n)^2}2\right)+2^{j-1}\left(3+
\binom{\omega(n)}2\right)\\
 &<\frac{7\tau(n)}8\left(\frac{0.862}{7/8}+\frac{8}7\cdot 2^{2\sqrt{\ell}-1-\ell}\left(3+\binom{\ell-3} 2\right)\right),
\end{align*}
 using that $q_1,q_2,q_3$ are distinct primes $\ge7$ for $j\ge3$ and $f(n)<93/100$. Our estimate for $|S|$ is $<\frac78\tau(n)$ for $\ell\ge35$.

If $r\ge4$, the neighborhood has cardinality at least $\frac{15}{16}\tau(n)$, and
\begin{align*}
|S|&\le\sum_{\substack{a\in A\\\omega((a,n))\ge1}}1\le\sum_{q\mid n}\sum_{\substack{a\in A\\q\midd a}}1
\le\tau(n)f(n)+2^{j-1}\omega(n)\\
&<\frac{93}{100}\tau(n)+2^{j-1}\omega(n)<\frac{15\tau(n)}{16}\left(\frac{93/100}{15/16}+\frac{16}
{15}\cdot
2^{2\sqrt{\ell}-1-\ell}\ell\right).
\end{align*}
This is $<\frac{15}{16}\tau(n)$ for $\ell\ge13$. Hall's condition holds, completing the proof.
\end{proof}

\section{Squarefree numbers with few prime factors} In this section we prove the following theorem.
\begin{theorem}
\label{th-few}
Every squarefree number with at most $44$ prime factors is matchable.
\end{theorem}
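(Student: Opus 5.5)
Reduce first to the even case. By Lemma \ref{lem:half}, if $u$ is odd and squarefree with $\omega(u)\le 43$, it suffices to treat $2u$. If $\omega(u)=44$ and $u$ is odd, then $2u$ has $45$ prime factors and is matchable by Theorem \ref{th:sqfr}, so again $u$ is matchable. Thus we may assume $u=p_1p_2\cdots p_\ell$ with $p_1=2$ and $\ell\le 44$.

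The difficulty is that $u$ ranges over infinitely many numbers for each $\ell$, so we cannot simply enumerate. The key observation is that the matching problem depends only on how the primes of $u$ sit relative to $[1,2^\ell]$. A prime $p_i>2^\ell$ divides no element of $[1,\tau(u)]$. Moreover, $\tau(u)$ and the set of divisors of $u$ lying below $2^\ell$ depend only on the primes $\le 2^\ell$.

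So write $u=u_0u_1$, where $u_1$ is the product of the primes of $u$ exceeding $2^\ell$. Replacing each prime of $u_1$ by a "generic" large prime does not change the bipartite graph, because every $a\le 2^\ell$ is coprime to every divisor involving only those primes.

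\emph{Step 1.} Build a recursive matching. As in the proof of Theorem \ref{th:sqfr}, for a large prime $q\nmid u_0$ we have $D(u_0q)=D(u_0)\cup qD(u_0)$, and $q$ is coprime to all of $[1,2\tau(u_0)]$. The plan is to strengthen the inductive statement to a \emph{shifted} form: "$D(m)$ has a coprime matching to every interval $[t+1,t+\tau(m)]$ with $0\le t$ a multiple of $\tau(m)$", or more generally to a suitable class of AP-combinations. Then adjoining a large prime splits $[1,2\tau]$ into two halves, matched to $D(u_0)$ and $qD(u_0)$.

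Since a shifted-interval statement for arbitrary squarefree $m$ is essentially a stronger conjecture, I would instead reuse Lemma \ref{lem:partition}. Partition by the small primes $m_j$, and handle the remaining primes $n$ with a Hall's theorem argument as in Section 3. Now, however, the primes of $n$ are large relative to $2^\ell$, so $f(n)$ restricted to primes $\le 2^\ell$ is small and only few $k$ values occur.

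\emph{Step 2.} Handle the small cases by computer. Here $u$ has many small primes, so the analytic estimates of Section 3 fail because the $2^{j-1}\binom{\omega(n)}k$ error terms are too big when $\ell$ is small. For $\ell$ small (say $\ell\le 10$ or so), the relevant finite data is the set of primes $\le 2^\ell$ dividing $u$. Primes above $2^\ell$ are irrelevant, and primes in $(2^\ell/2,2^\ell]$ each divide only one integer in range. I would run a bipartite matching (Hopcroft--Karp) over all choices of these small-prime configurations, after reducing "medium" primes to equivalence classes according to the multiset of multiples they have in $[1,2^\ell]$. In particular, primes $p>2^{\ell}/2$ behave identically, with only $p$ itself as a multiple, up to relabeling.

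\emph{Step 3.} Handle intermediate $\ell$ (roughly $11\le\ell\le44$) with refined Hall estimates. I would redo the case analysis of Theorem \ref{th:sqfr} exactly, with explicit counts rather than the bound $2^{j-1}$. For each choice of $j$ and each small prime configuration, the number of multiples of $d$ in $A_{v,j}$ can be computed exactly when $d$ is small, and bounded by $\tau(n)/d+2^{j-1}$ only for large $d$. Only $d\le 2^\ell$ matter, which sharply limits $\overline k$.

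I expect this middle range to be the main obstacle. The worst case is $u$ equal to the product of the first $\ell$ primes, where $f(n)$ is largest, and the $k=1,2$ Hall cases become tight. What I would try first is to choose the partition cutoffs in Lemma \ref{lem:partition} so that the multiples of the small primes of $n$ are distributed as evenly as possible. Then I would verify Hall's condition numerically, case by case in $\ell$, with the primes taken to be the first $\ell$ primes. This is the extremal configuration, and I would justify that by a monotonicity argument: replacing a prime of $u$ by a larger prime can only shrink the sets of multiples in $[1,2^\ell]$.
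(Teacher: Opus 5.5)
Your reduction to even $u$ is fine (Lemma \ref{lem:half} when $\omega(u)\le 43$, and Theorem \ref{th:sqfr} applied to $2u$ when $\omega(u)=44$). The gap is that the proposal never gives a valid way to pass from the infinitely many $u$ with a given $\ell$ to a finite computation, and that is the whole difficulty. Your justification for treating the first $\ell$ primes as extremal, namely that replacing a prime by a larger one ``only shrinks the sets of multiples'', does not do this. The multiples of $5$ and of $7$ are not nested, and having a perfect matching is not evidently monotone under such replacements. So checking Hall's condition (or running Hopcroft--Karp) for the first $\ell$ primes says nothing about other $u$. Natural refined statistics need not be monotone either: the number of odd $a\le 2^{25}$ with $(a,u)=105$ is $159{,}783$ for $u=105\,q_4\cdots q_{24}$ with all $q_i>2^{25}$, but only $83{,}729$ for the product of the first $24$ odd primes. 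What is needed is a Hall verification with two features. First, the lower bounds for $|N(S)|$ should depend only on the Boolean lattice $D(u)$: through $k=\min_{s\in S}\omega((s,u))$ they give $2^{\ell-k}$, improved by inclusion--exclusion when $S$ realizes several distinct gcds with $k$ primes. Second, the upper bounds for $|S|$ should go through union counts such as $c_{\ge i}(u)$, or $\#\{a: d\mid a \text{ or } \omega((a,u))\ge k+1\}$, whose monotonicity is actually proved by an injection like $a\mapsto ap^i/q^i$ in Lemma \ref{lem:monotonicity}. (The example above also shows that the paper's remark that individual $\gcd_d$ counts are majorized needs this adjustment, though its argument only ever uses $\gcd_d$ inside such combined counts.)

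Your chosen framework actually obstructs such a reduction. The estimates of Section~3 bound $|S|$ by sums $\sum_d(\tau(n)/d+2^{j-1})$ over divisors, which need $f(n)$ well below $1$. For the first $\ell$ primes with $\ell\ge 10$ this forces $j\ge 2$: with $j=1$ one already has $\sum_{3\le p\le 29}1/p>1$, and the $k=1$ estimates collapse. But for $j\ge 2$ three things go wrong. The pieces $A_{v,j}$ are cut out by thresholds $z_v$ that depend on $p_2,\dots,p_j$. The per-divisor errors $2^{j-1}$, summed over up to $\binom{\omega(n)}{k}$ divisors, are what keep Theorem \ref{th:sqfr} from reaching below $45$ in the first place. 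And any exact count inside $A_{v,j}$ belongs to a single configuration, since $a\mapsto ap^i/q^i$ moves elements across the thresholds. You are then left with all configurations of primes up to $2^{44}$, far beyond your Step 2 enumeration. (Step 1's claim that the primes of $n$ are large compared with $2^\ell$ is also false in general, including for your own extremal case.)

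The paper avoids the partition entirely for $\ell\le 44$. It splits off only the prime $2$, exactly: odd divisors $d$ go to even integers $2a$, and even divisors $2d$ go to odd integers. This reduces the theorem to Propositions \ref{prop:intervalmatch} and \ref{prop:oddmatch} for odd $u'$. It then runs Hall's theorem over the whole range, with no thresholds and no $2^{j-1}$ errors. It uses exact union counts (not sums over divisors), computed once for the product of the first $\ell$ odd primes and transferred to every $u$ via Lemma \ref{lem:monotonicity}.
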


 We illustrate the argument for $\ell=24$, the smallest case that exhibits the full complexity, and then discuss the modifications for other $\ell\le44$. We first establish Proposition \ref{prop:oddmatch} at the end of this section for $\ell=24$: we show there is a coprime matching from $D(u)$ to the odd integers in $[1,2^{25}]$, where $u$ is odd, squarefree, and with 24 prime factors.

Let $u = q_1q_2\cdots q_{24}$ be the product of any $24$ distinct
odd primes $3 \leq q_1<q_2<\cdots<q_{24}$, so $\tau(u)=2^{24}=16{,}777{,}216$. We wish to show there is a coprime matching between the $2^{24}$ odd integers in $[1,2^{25}]$ and $D(u)$.
By Hall's theorem it suffices to show that for every set $S$ of odd integers in $[1,2^{25}]$, the neighborhood
\[
N(S) \;=\; \{\,d \mid u : \gcd(d,a)=1 \text{ for some } a\in S\,\}
\]
satisfies $|N(S)|\ge|S|$.

Let $c_i(u)$ denote the number of odd integers in $[1,2^{25}]$ sharing exactly $i$ prime factors with $u$, \[c_i(u) \coloneqq \#\{\text{odd }a\in[1,2^{25}]: \omega((a,u))=i\}\] and $c_{\geq i}(u) =
\sum_{i'\geq i}c_{i'}(u)$ the count of those odd integers sharing at least $i$ prime factors with $u$.  A key observation is that the counts $c_{\ge i}(u)$ are only made larger if the odd
primes comprising $u$ are replaced with smaller odd primes.

\begin{lemma}\label{lem:monotonicity}
Let $u$ be a squarefree odd number as above and let $p,q$ be odd primes with
$q\mid u$,  $p\nmid u$, and $p<q$. Then
$c_{\ge i}(u)\le c_{\ge i}(u\cdot p/q)$ for all $i\ge1$.
\end{lemma}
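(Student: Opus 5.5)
The plan is to isolate the effect of the single prime being swapped and reduce the inequality to a comparison of powers of $p$ and $q$. Put $w=u/q$ and $u'=up/q=wp$. Since $u$ is squarefree, $q\nmid w$, and $p\nmid w$ because $p\nmid u$. Hence for every odd $a$,
\[
\omega((a,u))=\omega((a,w))+[\,q\mid a\,],\qquad \omega((a,u'))=\omega((a,w))+[\,p\mid a\,].
\]
So an odd $a\in[1,2^{25}]$ counted by $c_{\ge i}(u)$ either has $\omega((a,w))\ge i$, or has $\omega((a,w))=i-1$ and $q\mid a$. The first group is common to $u$ and $u'$. It therefore suffices to show
\[
\#\{\text{odd }a\le 2^{25}:\ \omega((a,w))=i-1,\ q\mid a\}\le\#\{\text{odd }a\le 2^{25}:\ \omega((a,w))=i-1,\ p\mid a\}.
\]

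Next, write each odd $a$ uniquely as $a=p^sq^tm$ with $s,t\ge0$ and $\gcd(m,pq)=1$. Here $m$ is odd, and $(a,w)=(m,w)$ because $p,q\nmid w$. So the condition $\omega((a,w))=i-1$ depends only on $m$. I will fix such an $m$ and put $Y=2^{25}/m$. The inequality then reduces to comparing
\[
A=\{(s,t): p^sq^t\le Y,\ t\ge1\}\quad\text{and}\quad B=\{(s,t): p^sq^t\le Y,\ s\ge1\},
\]
and showing $|A|\le|B|$. Summing over all admissible $m$ then gives the displayed inequality, and hence $c_{\ge i}(u)\le c_{\ge i}(u')$.

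Finally, $A$ and $B$ share $A\cap B$, so it is enough to compare the differences $A\setminus B=\{(0,t): q^t\le Y,\ t\ge1\}$ and $B\setminus A=\{(s,0): p^s\le Y,\ s\ge1\}$. Because $p<q$, the condition $q^t\le Y$ implies $p^t\le Y$. So $(0,t)\mapsto(t,0)$ injects $A\setminus B$ into $B\setminus A$, which proves $|A|\le|B|$.

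The only step needing care is the decomposition in the first paragraph. A naive global swap of the exponents of $p$ and $q$ is not injective, which is why I fix $m$ and compare only the set differences. Using the facts that $p\nmid u$ and that $u$ is squarefree makes that step clean. Note that nothing uses the specific bound $2^{25}$, so the same argument works for odd integers up to any bound.
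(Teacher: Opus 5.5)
Your proof is correct and is essentially the paper's argument. The paper builds a single global injection that fixes every $a$ with $\omega((a,u/q))\ge i$ or $pq\mid a$ and sends $a\mapsto ap^j/q^j$ when $q^j\,\|\,a$ and $p\nmid a$; this is exactly your identity on $A\cap B$ together with $(0,t)\mapsto(t,0)$ on $A\setminus B$, assembled over all $m$, and your fiberwise bookkeeping over the $pq$-free part $m$ simply makes the injectivity check explicit.
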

\begin{proof}
Set $v=u/q$, so $u=vq$ and $u\cdot p/q=vp$.
We construct an injection $f$ from integers counted by $c_{\ge i}(vq)$ to those counted by $c_{\ge i}(vp)$.
For each integer $a$ counted by $c_{\ge i}(vq)$, set
\[
f(a)=\begin{cases}
a & \text{if } \omega((v,a))\ge i, \text{ or } pq\mid a \text{ and } \omega((v,a))=i-1, \\
a\cdot p^j/q^j & \text{otherwise (where } q^j\,\|\, a\text{)}.
\end{cases}
\]
Note that if $a$ falls into the second case above, we must have $\omega((v,a))=i-1$ and $p\nmid a$.
In each of the conditions for the first case above we find that $a=f(a)$ is counted by both $c_{\ge i}(vq)$ and $c_{\ge i}(vp)$. Note that in the second case above, since $p<q$ we have $f(a)<a\le 2^{25}$ and $f(a)$ odd, and since $p,q\nmid v$ the primes of $v$ dividing $f(a)$ are the same as those dividing $a$, so $\omega((vp,f(a)))=(i-1)+1=i$.  Thus $f$ maps into the target set.

For injectivity note that in the second case, since $q\nmid f(a)$, $\omega((vq,f(a)))=i-1<i$, so the images $f(a)$ lie outside the domain and cannot collide with the images from the first-case.
\end{proof}

Let $n = 3\cdot5\cdot7\cdots97$ denote the product of the \emph{smallest} $24$ odd primes, so $q_i \ge p_i$ where $p_i$ is the $i$-th odd prime. Applying Lemma \ref{lem:monotonicity} repeatedly, each time replacing a prime of $u$ with a smaller prime of $n$, gives $c_{\ge i}(u)\le c_{\ge i}(n)$ for each $i\ge1$.

\medskip
The rest of the proof will rely heavily on the counts $c_i(n)$ and $c_{\geq i}(n)$, which we will write as $c_i$ and $c_{\ge i}$, respectively. We will also occasionally need counts for the number of integers having a fixed greatest common divisor with $u$ (respectively $n$).

As with the counts $c_{\ge i}$, the number of odd integers in the interval $[1,2^{25}]$ whose gcd with
$u$ is $q_{j_1}q_{j_2}\cdots q_{j_i}$ is majorized by the number of such integers whose gcd with
$n$ is $p_{j_1}p_{j_2}\cdots p_{j_i}$. We denote $\gcd_d \coloneqq \#\{\text{odd }a\in[1,2^{25}]: (a,n)=d\}$.

We record in Table \ref{tab:oddcensus} the computed values of $c_i$ for $3\leq \ell \leq 44$, and in Table \ref{tab:oddgcds} the computed values of $\gcd_d$ for $d=105, 15, 21, 3, 5$ (all computations\footnote{Python code used to generate the numbers in these tables is included with the arXiv version of this paper.} were performed using exact values; large entries in the tables are rounded up, as noted in the captions). The former table also contains a column, $\omega_{\max}$ containing the largest value of $i$ such that $c_i$ is nonzero, and the latter table includes a column $x_3$, containing the count of odd integers in $[1,2^{25}]$ which are either divisible by 3 or counted by
$c_{\ge 3}$, whose purpose will be explained shortly. In the latter table, values are only included when needed in the analogue of the argument described below (blank entries are not necessarily 0).

We will use frequently the observation that if $k\coloneqq \min_{a\in S}\omega((a,u))$ then
\[
|N(S)| \;\ge\; 2^{24-k} \;=\; \frac{\tau(u)}{2^k}.
\]

\medskip
\noindent Let $S$ be a nonempty set of odd integers in $[1,2^{25}]$; we verify Hall's condition by considering the possible values of $k = \min_{s\in S}\omega((s,u))$.

By summing the values of $c_i$ in Table \ref{tab:oddcensus}, in the row for $\ell = 24$ we find that
\[
c_{\ge 5}=88{,}525,\quad c_{\ge 4}=485{,}129,\quad c_{\ge 3}=2{,}151{,}882,\quad
c_{\ge 2}=6{,}377{,}708,\quad
c_{\ge 1}=12{,}741{,}251.
\]

\subsection{Step 1: Cases $k\ge4$.}
First, suppose $k\geq 5$. Then by the monotonicity described above we have $|S|\leq c_{\ge 5} = 88{,}525$. On the other hand, since $\omega_{\max}=7$ in the row $\ell=24$ of Table \ref{tab:oddcensus}, we have $\omega((s,u))\leq 7$ for all $s$, and thus $|N(S)|\geq |N(s)|
\geq 2^{24-7} = 131{,}072$. So $|S|<|N(S)|$ and Hall's condition holds.

If $k=4$ then every element $s \in S$ has $\omega((s,u))\ge4$ so
$|S|\le c_{\ge 4}=485{,}129$. Since at least one element has $\omega((s,u))=4$ we find that
$|N(S)|\ge 2^{24-4}=1{,}048{,}576>485{,}129\ge|S|$,
so again, Hall's condition holds.

\subsection{Step 2: Case $k=3$ }

Every element of $S$ has $\omega\ge3$, so $|S|\le c_{\ge3}=2{,}151{,}882$, while $|N(S)|\ge2^{21}=2{,}097{,}152$.
Since $2{,}151{,}882>2{,}097{,}152$ we cannot immediately conclude that Hall's condition holds.

Let $s \in S$ be such that $\omega((s,u))=3$ and let $d=(s,u)$. Then $d \geq 105=3\times 5
\times 7$ and, by the monotonicity mentioned above we have $\gcd_d \leq \gcd_{105}=83{,}729$.

If $d$ were unique, then every element of $S$ sharing exactly 3 prime factors with $u$ would need to have greatest common divisor $d$ with $u$. Then we would have \[ |S| \leq \gcd\nolimits_d + c_{\geq 4}(u) \leq \gcd\nolimits_{105} +c_{\geq 4} = 83{,}729+ 485{,}129 <2^{21}\le |N(S)|,\] and so Hall's condition would be satisfied. So we suppose $d$ is not unique, namely there is a second
element, $s'$ having $\omega((s',u))=3$ but $(s,u)\neq (s',u)$. In this case, since $(s,u)$ and $ (s',u)$ can share at most two primes, considering the neighborhood of just $s$ and $s'$ we find, by inclusion-exclusion that \[|N(S)|\geq
|N(s)\cup N(s')| = 2^{21}+2^{21}-2^{20} = \frac{3}{16}\cdot2^{24} = 3{,}145{,}728.
\]
But $|S|\le c_{\ge3}=2{,}151{,}882<3{,}145{,}728\le|N(S)|$, hence Hall's condition holds when
$k=3$.

\subsection{Step 3: Case $k=2$}

Every element of $S$ has $\omega\ge2$, so $|S|\le c_{\ge2}=6{,}377{,}708$, while $|N(S)|\ge2^{22}=4{,}194{,}304$.
Again, we cannot immediately conclude using Hall's theorem, but we can assume $|S|>|N(S)|
\ge2^{22}$.

Let $s \in S$ such that $\omega((s,u))=2$ and let $e=(s,u)$. Then $e \geq 15=3\times 5$ and, by monotonicity, $\gcd_e \leq \gcd_{15}=504{,}881$.
If $e$ were unique, we would find that \[|S|\leq \gcd\nolimits_{15} + c_{\ge 3} = 504{,}881+2{,} 151{,}882 = 2{,}656{,}763
< 2^{22}\le|N(S)|,
\]
satisfying Hall's condition, and so we assume that $e$ is not unique, namely there exists a second
$s'\in S$ with $\omega((s',u))=2$ but $(s',u) = e' \neq e$.
Since $e$ and $e'$ share at most one prime factor, we can update our lower bound for the neighborhood $N(S)$ to
\begin{equation}
|N(S)|\ge|N(s) \cup N(s')|=2^{22} +2^{22}-2^{21} = \frac38\times 2^{24} = 6{,}291{,}456.
\label{eq:twogcdtwo}
\end{equation}

Note that this is still less than $c_{\ge 2}$. Now, if $e$ and $e'$ were the only two such divisors, then
\begin{align*}
|S| &\leq\gcd\nolimits_e + \gcd\nolimits_{e'} + c_{\ge 3} \leq \gcd\nolimits_{15} +
\gcd\nolimits_{21} + c_{\ge 3} \\
&= 504{,}881+336{,}514+2{,}151{,}882 = 2{,}993{,}277
< 6{,}291{,}456\le|N(S)|.
\end{align*}
Again, Hall's condition is satisfied in this case, leaving us with the possibility that there is a third
$s''\in S$, with $\omega((s'',u))=2$ and where $(s'',u)=e'' \notin \{e,e'\}$. In this case the neighborhood of $S$ is minimized in the situation when $e,e',e''$ all share a single prime factor, in which case it is bounded below by 
\[
|N(S)|\ge|N(s) \cup N(s') \cup N(s'')| \ge 3\cdot 2^{22}-3\cdot 2^{21} +2^{20}= 3\cdot2^{21} +
2^{20} = \frac7{16} \cdot2^{24} = 7{,}340{,}032.
\]
But $|S|\le c_{\ge2}=6{,}377{,}708<7{,}340{,}032\le|N(S)|$. Hence, in every case Hall's condition holds when $k=2$.

\subsection{Step 4: Case $k=1$}

Every element of $S$ has $\omega\ge1$, so $|S|\le c_{\ge1}=12{,}741{,}251$,
while $|N(S)|\ge2^{23}=8{,}388{,}608$, so we will again need to work with the specific gcds.

Proceeding as above, we suppose $s \in S$ has $\omega((s,u))=1$, with $(s,u)=q$ and suppose that $q$ is unique. Then
\[|S|\leq \gcd\nolimits_3 + c_{\ge 2} = 2{,}019{,}785+6{,}377{,}708 = 8{,}397{,}493
> 2^{23}.
\]
Note that unlike in previous steps, this bound does not allow us to conclude (yet) that there is another $s'$ and another prime $q'$ distinct from $q$ with $(s',u)=q'$. So we consider again those elements of $S$ sharing two prime factors with $u$, one of which is $q$.

Let $x_q$ denote the total number of odd numbers in $[1,2^{25}]$ which are either divisible by
$q$ or counted by $c_{\ge 3}$. As with other statistics, this count is majorized by the count
$x_3=6{,}334{,}949$ which is included in Table \ref{tab:oddgcds}.

Since this count is smaller than our bound $|N(S)|\ge2^{23}$, we would be done if $S$ consisted only of elements counted by $x_3$. So we suppose that $S$ contains $s'$, not counted by
$x_q$. Then $\omega((s',u))\leq 2$ and $q\nmid s'$. In this case, we can update our lower bound on $|N(S)|$. This quantity is smallest when $s'$ shares precisely two prime factors with
$u$, neither of which is $q$, in which case we find that

\[
|N(S)|\ge|N(s) \cup N(s')| = 2^{23} +2^{22}- 2^{21}= 5\cdot 2^{21} =  \frac58\cdot2^{24} = 10{,}
485{,}760.
\]
Since this quantity now exceeds $\gcd\nolimits_3 + c_{\ge 2}$, we find that Hall's criterion is necessarily satisfied unless $(s',u)=q'$ where $q'\neq q$ is a different prime factor. Now our lower bound for $|N(S)|$ improves to
\[
|N(S)| \geq |N(s)\cup N(s')| = 2^{23}+2^{23}-2^{22}
= 3\cdot2^{22} = \frac34\cdot2^{24} = 12{,}582{,}912.
\]

Using this bound, we now return to our original line of argumentation. This bound exceeds
\begin{align*}
c_{\geq 2} + \gcd\nolimits_q + \gcd\nolimits_{q'} &\leq c_{\geq 2} + \gcd\nolimits_3 +
\gcd\nolimits_{5}\\
&= 6{,}377{,}708+2{,}019{,}785+1{,}010{,}179 = 9{,}407{,}672 < 12{,}582{,}912.
\end{align*}
From this, we see that Hall's condition will be satisfied unless $S$ contains a third element $s''$ with $\omega((s'',u))=1$ and
$(s'',u)=q''$ for some prime divisor $q''\ne q,q'$ of $u$.

With three elements $s$, $s'$, and $s''$ all having mutually distinct prime gcds with $u$, inclusion-exclusion gives
\[
|N(S)|\ge |N(s) \cup N(s') \cup N(s'')| \ge 3\cdot2^{23}-3\cdot2^{22}+2^{21}
= \frac78 \cdot2^{24} = 14{,}680{,}064.
\]
But $|S|\le c_{\ge1}=12{,}741{,}251<14{,}680{,}064\le|N(S)|$, so Hall's condition holds when $k=1$.

\subsection{Step 5: Case $k=0$.}

If the minimum value of $\omega((s,u))$ over $s\in S$ is $0$, then some
$s\in S$ is coprime to $u$, so $N(S)=D(u)$ and $|N(S)|=2^{24}\ge|S|$ and Hall's condition holds immediately.

In all cases we have $|N(S)|\ge|S|$, so Hall's condition holds and a perfect matching exists. Since the argument used only upper bounds on census counts, and these upper bounds hold for any $u=q_1\cdots q_{24}$ with $q_i\ge p_i$, the result applies to every squarefree product of $24$ distinct odd primes.

\subsection{Adjustments for other $\ell \leq 44$}

Essentially the same argument can be carried through using any $\ell$ in place of 24, for $3\leq
\ell \leq 44$. The only important adjustment is to change the computed $c_{\geq j}$ values and $
\gcd_d$ values according to the entries in Table \ref{tab:oddcensus} and Table \ref{tab:oddgcds}.

For values of $\ell<24$ some of the steps above can be omitted, for example when $\ell<23$, the argument involving $\gcd_{21}$ is unnecessary. After considering $c_{\geq 3} + \gcd_{15}$ and using it to conclude that there exists $s' \in S$ with $\omega((s',u))\leq 2$, $(s',u)\neq (s,u)$ it is already possible to conclude directly that $|N(S)|>|S|$, since in this case we find, as in
\eqref{eq:twogcdtwo} that $|N(S)|\geq \frac{3}{8}\cdot 2^\ell$, which is already greater than
$c_{\ge 2}$. When a quantity is unneeded for the argument, it is omitted from Table
\ref{tab:oddgcds}.

For values of $\ell > 24$, sometimes additional steps are necessary in the initial Step 1. For example when $\ell=40$, the maximum value of $\omega((s,u))$ over $s\in S$ is 10 (as noted in Table \ref{tab:oddcensus} in the $\omega_{\max}$ column). Since $2^{40-10}=1{,}073{,}741{,} 824\geq c_{\ge 7}$, Hall's condition is satisfied for any $k\ge 7$. One can then check for each
$4\leq k <7$ that $2^{40-k}\geq c_{\ge k}$ so the condition is met for each of these $k$ as well. Putting this all together, we have shown the following.
\begin{proposition} \label{prop:oddmatch}
  For any $\ell \le 44$ and $u$ the product of $\ell$ distinct odd primes, there exists a coprime matching between $D(u)$ and the odd integers in $[1,2^{\ell+1}]$.
\end{proposition}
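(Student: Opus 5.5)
The plan is to verify Hall's condition for the bipartite graph between the odd integers in $[1,2^{\ell+1}]$ and $D(u)$, with edges joining coprime pairs, exactly as in the worked case $\ell=24$. Fix a nonempty set $S$ of odd integers in $[1,2^{\ell+1}]$ and split into cases according to $k=\min_{s\in S}\omega((s,u))$. If $k=0$, then $N(S)=D(u)$ and there is nothing to prove. In every other case the basic bound is $|N(S)|\ge 2^{\ell-k}$.

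The first step is a reduction to a single extremal modulus. Applying Lemma~\ref{lem:monotonicity} repeatedly, I replace the primes of $u$ by the $\ell$ smallest odd primes, which gives $n=3\cdot5\cdots p_\ell$. This shows that $c_{\ge i}(u)\le c_{\ge i}(n)$ for all $i$. The same injection argument shows that the counts $\gcd_d$ and the auxiliary count $x_q$ (odd integers divisible by $q$ or sharing at least three primes with $u$) are majorized by the corresponding counts for $n$, with $d$ replaced by the product of the smallest odd primes of the same index pattern. Hence it suffices to compute, for each $\ell\le 44$, the exact census numbers $c_i(n)$, $\omega_{\max}$, $\gcd_{105},\gcd_{15},\gcd_{21},\gcd_3,\gcd_5$ and $x_3$. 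These are exactly Tables~\ref{tab:oddcensus} and~\ref{tab:oddgcds}.

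The second step is the case analysis for each $\ell$, following Steps 1--4 above.

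\textbf{Large $k$.} For $k\ge4$, I check that $c_{\ge k}\le 2^{\ell-k}$. Where this fails for the largest $k$, I instead use $|N(S)|\ge 2^{\ell-\omega_{\max}}\ge c_{\ge k}$.

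\textbf{Case $k=3$.} Either all elements with three shared primes have the same gcd $d$, so that $|S|\le\gcd_{105}+c_{\ge4}<2^{\ell-3}$, or there are two distinct such gcds. In the latter case inclusion--exclusion gives $|N(S)|\ge\frac3{16}2^\ell$, which I compare with $c_{\ge3}$.

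\textbf{Case $k=2$.} This is the analogous escalation. One gcd gives $|S|\le \gcd_{15}+c_{\ge3}$. Two gcds give $|N(S)|\ge\frac38 2^\ell$ against $\gcd_{15}+\gcd_{21}+c_{\ge3}$. Three gcds give $|N(S)|\ge\frac7{16}2^\ell$ against $c_{\ge2}$.

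\textbf{Case $k=1$.} Here I use $x_3$ to force some element of $S$ outside the set counted by $x_q$. Either that element has two primes other than $q$, giving $\frac58 2^\ell$ against $\gcd_3+c_{\ge2}$, or a second prime gcd $q'$ occurs, giving $\frac34 2^\ell$ against $c_{\ge2}+\gcd_3+\gcd_5$. A third prime gcd gives $\frac78 2^\ell$ against $c_{\ge1}$.

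For small $\ell$ some of these rungs are unnecessary, because a cruder bound already exceeds the relevant $c_{\ge i}$. For $\ell=1,2$ the statement can be checked directly.

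The main obstacle is that this is not a uniform argument. Each $\ell\in[3,44]$ requires its own finite verification, and for each one I must confirm that the chain of inequalities closes at some rung. The tight spots are $k=1$ and $k=2$ near $\ell\approx 24$–$44$, where $c_{\ge1}$ and $c_{\ge2}$ come closest to $\frac78 2^\ell$ and $\frac7{16}2^\ell$. If a rung failed for some $\ell$, I would refine it further, for example by splitting according to gcd $3$ versus larger primes, or by adding a fourth distinct gcd. The other delicate point is making sure Lemma~\ref{lem:monotonicity} really majorizes the fixed-gcd counts $\gcd_d$ and $x_q$, not just $c_{\ge i}$. I would handle this by rerunning the injection with the roles of $p$ and $q$ restricted to primes outside $d$, and otherwise mapping $q\mapsto p$ positionally.
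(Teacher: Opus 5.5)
Your outline is the paper's own proof. It uses Hall's theorem, the split on $k=\min_{s\in S}\omega((s,u))$, the reduction to $n=3\cdot5\cdots p_\ell$ via Lemma~\ref{lem:monotonicity}, and the same escalating rungs checked against Tables~\ref{tab:oddcensus} and~\ref{tab:oddgcds}. The step you flagged as delicate is in fact false as stated: the individual counts $\gcd_d$ are \emph{not} majorized positionally, and your proposed fix pushes in the wrong direction. The paper asserts the same positional majorization in one sentence, so this is a weak point you share with it rather than a deviation from it.

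Here is why it fails. $\gcd_d(u)$ is the number of odd $b\le 2^{\ell+1}/d$ coprime to $u/d$, and this \emph{grows} when a prime of $u/d$ is replaced by a larger one. So replacing primes outside $d$ by smaller ones, as you propose, can only shrink it. Concretely, take $\ell=24$ and $u=3\cdot7\cdot11\cdots97\cdot101$ (that is, replace $5$ by $101$ in $n$). Then $\gcd_3(u)-\gcd_3(n)$ equals the number of odd $c\in(2^{25}/303,\,2^{25}/15]$ coprime to $7\cdot11\cdots97$, which is positive. Hence the bound $\gcd_q\le\gcd_3$ used in the $k=1$ case fails. You cannot fall back on the trivial bound $\gcd_d(u)\le 2^{\ell}/d+1$ there either. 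It suffices in the $k=2,3$ rungs at $\ell=24$, but $2^{24}/3+c_{\ge2}>\frac58\cdot 2^{24}$.

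The repair is to majorize the sets that actually bound $|S|$, not their pieces. These sets are:
\begin{itemize}
\item $\{a:q\mid a\}\cup\{a:\omega((a,u))\ge2\}$, whose size is exactly $\gcd_q+c_{\ge2}$;
\item $\{a:q\mid a\}\cup\{a:q'\mid a\}\cup\{a:\omega((a,u))\ge2\}$;
\item the set counted by $x_q$;
\item $\{a:d\mid a\}\cup\{a:\omega((a,u))\ge4\}$, and the analogues in the $k=2$ rung.
\end{itemize}
Membership in each depends only on which primes of $u$ divide $a$, and it is upward closed in that set of primes. For such sets the injection of Lemma~\ref{lem:monotonicity} shows that the size does not decrease: fix $a$ if it stays in the set, otherwise replace $q^j\,\|\,a$ by $p^j$. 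This works in two situations:
\begin{itemize}
\item a prime $q$ of $u$ is replaced by a smaller $p\nmid u$, with $q$ renamed to $p$ in the distinguished divisors;
\item a distinguished prime is swapped with a smaller undistinguished prime of $u$.
\end{itemize}
Move the distinguished primes to the bottom of $u$, then replace positionally. This gives $\gcd_q(u)+c_{\ge2}(u)\le\gcd_3(n)+c_{\ge2}(n)$, $\gcd_q(u)+\gcd_{q'}(u)+c_{\ge2}(u)\le\gcd_3(n)+\gcd_5(n)+c_{\ge2}(n)$, $x_q(u)\le x_3(n)$, and so on. Your claim for $x_q$ is fine, since it is already a union of this type. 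These combinations are exactly the ones in which the tabulated $\gcd$ values enter the argument.
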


An identical Hall's theorem argument establishes Proposition \ref{prop:intervalmatch}, with the bipartite graph now having $D(u)$ matched to all integers in $[1,2^\ell]$ (rather than just odd integers in $[1,2^{\ell+1}]$), and using the values in Table \ref{tab:censusinterval} and Table \ref{tab:gcdsinterval} in place of Tables \ref{tab:oddcensus} and \ref{tab:oddgcds}.

\begin{proposition} \label{prop:intervalmatch}
  For any $\ell \le 44$ and $u$ the product of $\ell$ distinct odd primes, there exists a coprime matching between $D(u)$ and the integers in $[1,2^{\ell}]$.
\end{proposition}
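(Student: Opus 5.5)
The plan is to run the same Hall's-theorem argument used for Proposition \ref{prop:oddmatch}, now on the bipartite graph between $D(u)$ and all of $[1,2^\ell]$, with an edge whenever the two numbers are coprime. The first step is a monotonicity lemma analogous to Lemma \ref{lem:monotonicity}. Write $c'_{\ge i}(u)$ for the number of $a\in[1,2^\ell]$ with $\omega((a,u))\ge i$, and $\gcd'_d$ for the number of $a\in[1,2^\ell]$ with $(a,u)=d$. I would prove that replacing a prime $q\mid u$ by a smaller odd prime $p\nmid u$ can only increase $c'_{\ge i}$. The injection is the one from Lemma \ref{lem:monotonicity}: keep $a$ fixed in the first case, and otherwise send $a\mapsto a p^j/q^j$ with $q^j\,\|\,a$. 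The image stays in $[1,2^\ell]$ because $p<q$. Oddness played no role in that proof beyond preserving parity, so it carries over verbatim. Consequently every count is majorized by the corresponding count for $n=3\cdot5\cdots p_\ell$, the product of the $\ell$ smallest odd primes, and likewise for $\gcd'_d$ with the index-matched primes.

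With those majorants computed and tabulated (Tables \ref{tab:censusinterval} and \ref{tab:gcdsinterval}), I would take an arbitrary $S\subseteq[1,2^\ell]$, set $k=\min_{s\in S}\omega((s,u))$, and verify $|N(S)|\ge|S|$ case by case.
\begin{itemize}
\item $k=0$: some element of $S$ is coprime to $u$, so $N(S)=D(u)$ and the condition is trivial.
\item $k$ large: compare $c'_{\ge k}$ with $2^{\ell-k}$. If needed, compare it with $2^{\ell-\omega_{\max}}$, where $\omega_{\max}$ is the largest number of prime factors of $u$ that any integer up to $2^\ell$ can share with $u$.
\item $k=3,2,1$: use the stepwise refinement from the case $\ell=24$. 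If the minimal gcd $d$ is unique, bound $|S|\le \gcd'_{d_0}+c'_{\ge k+1}$, where $d_0$ is the smallest possible such gcd ($105$, $15$, or $3$). Otherwise a second distinct gcd exists, and inclusion--exclusion raises the lower bound to $\tfrac3{16}2^\ell$, $\tfrac38 2^\ell$, or $\tfrac34 2^\ell$ respectively. For $k=1$, include the intermediate step with the quantity $x'_3$, the number of integers divisible by $3$ or counted by $c'_{\ge3}$. This forces either an element whose gcd with $u$ has two primes both different from $q$, or a second prime gcd $q'$. A third distinct gcd then gives the lower bound $\tfrac7{16}2^\ell$ (for $k=2$) or $\tfrac78 2^\ell$ (for $k=1$).
\end{itemize}

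Compared with the odd case, the interval $[1,2^\ell]$ has half as many integers but includes the evens. Since $u$ is odd, the evens behave like a scaled copy of the odds, so the densities of integers sharing primes with $u$ are essentially the same as before. The census ratios $c'_{\ge i}/2^\ell$ should therefore be close to the ratios $c_{\ge i}/2^\ell$ from the odd case, and the same chain of inequalities should close. Small $\ell$ (say $\ell\le 2$) are checked by hand.

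The main obstacle is the finite verification for each $\ell\le 44$, above all the case $k=1$. There, as already seen for $\ell=24$, $\gcd'_3+c'_{\ge2}$ can slightly exceed $2^{\ell-1}$, so the refinement through $x'_3$ is genuinely needed. One must also check that each of the resulting inequalities actually holds at every $\ell$. This is a computation using exact counts, which I would carry out with the same code used for Tables \ref{tab:oddcensus} and \ref{tab:oddgcds}, adapted to the full interval. If at some $\ell$ an inequality failed, I would refine further by splitting on the actual second gcd, mirroring the $\gcd_{21}$ step in the odd case.
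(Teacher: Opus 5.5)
Your route is the paper's route. You transfer Lemma~\ref{lem:monotonicity} to all of $[1,2^\ell]$ (parity is indeed irrelevant there) and rerun the $\ell=24$ casework on $k$, including the $x_3$ refinement at $k=1$, with Tables~\ref{tab:censusinterval} and~\ref{tab:gcdsinterval}.

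\textbf{The gap.} The step ``likewise for $\gcd'_d$ with the index-matched primes'' is false. Enlarging the primes of $u/d$ makes coprimality to $u/d$ easier, so the exact-gcd count can go up.
\begin{itemize}
\item For $\ell=4$ and $u=3\cdot17\cdot19\cdot23$, the integers in $[1,16]$ with $(a,u)=3$ are $3,6,9,12,15$, five of them. For $n=3\cdot5\cdot7\cdot11$ there are only four, since $15$ now has gcd $15$.
\item For $\ell=24$ and $u=3q_2\cdots q_{24}$ with all $q_i>2^{24}$, one gets $\gcd'_3(u)=\lfloor 2^{24}/3\rfloor=5{,}592{,}405$, against the tabulated $2{,}020{,}025$.
\end{itemize}
The injection of Lemma~\ref{lem:monotonicity} fails here because the set $\{a:(a,u)=d\}$ is not closed under enlarging the set of shared primes. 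The paper's own $\ell=24$ write-up asserts the same majorization, including ``$\gcd_d\le\gcd_{105}$ by monotonicity''. So you have not departed from the paper, but as written your bounds $|S|\le\gcd'_{d_0}+c'_{\ge k+1}$ and their two-gcd analogues are unjustified.

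\textbf{The repair.} Never bound $\gcd'_d$ on its own; bound only unions. Every quantity the argument needs counts the $a\le 2^\ell$ whose set of primes shared with $u$ lies in a family $\mathcal F$ closed under supersets.
\begin{itemize}
\item $\gcd'_d+c'_{\ge k+1}$ counts $\{d\mid a\}\cup\{\omega((a,u))\ge k+1\}$.
\item The same holds for $\gcd'_e+\gcd'_{e'}+c'_{\ge 3}$, for $\gcd'_q+\gcd'_{q'}+c'_{\ge2}$, and for $x'_q$.
\end{itemize}
For such $\mathcal F$, replacing a prime $r\mid u$ by a smaller prime $p\nmid u$ (renaming $r$ to $p$ in $\mathcal F$) cannot decrease the count. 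Keep $a$ if it already lies in the new set. Otherwise necessarily $r\mid a$ and $p\nmid a$, and $a\mapsto ap^j/r^j$ (where $r^j\,\|\,a$) lands in the new set but outside the old one. Hence the combined map is injective.

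After the index-matched replacement $u\to n$, one more step is needed. Swapping $r^e$ for $s^e$, with a smaller prime $s\mid n/d$, gives $\gcd'_d(n)\le\gcd'_{ds/r}(n)$; the same swap gives $x'_p(n)\le x'_3(n)$. This moves the single gcds to $105$, $15$ and $3$, and bounds sums over two distinct gcds by $\gcd'_{15}+\gcd'_{21}$ and $\gcd'_3+\gcd'_5$. With this union-monotonicity lemma in place of the term-by-term claim, every inequality in your outline holds with the table values, and the argument closes as in the paper.
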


We can now combine these propositions to give a proof of Theorem \ref{th-few}.

\begin{proof}[Proof of Theorem \ref{th-few}]
  If $u$ is squarefree, odd, and has at most 44 prime factors, then $u$ is matchable by Proposition \ref{prop:intervalmatch}. If $u$ is even and has $\ell \leq 44$ prime factors, write
$u=2u'$; we create a matching as follows. By Proposition \ref{prop:oddmatch} applied to $u'$ (which has $\ell-1$ odd prime factors), there exists a coprime matching of the divisors of $u'$ to the odd integers in $[1,2^\ell]$. We associate to each even divisor $2d$ of $u$ the odd integer matched to $d$ by this proposition. Then, by Proposition \ref{prop:intervalmatch} applied to $u'$, there exists a coprime matching of $D(u')$ to the integers in $[1,2^{\ell-1}]$; for every odd divisor
$d$ of $u$ (which is also a divisor of $u'$), we match $d$ to $2a$, where $a\in[1,2^{\ell-1}]$ is the integer matched to $d$.

  The first construction matches the $2^{\ell-1}$ even divisors of $u$ bijectively to the $2^{\ell-1}$ odd integers in $[1,2^\ell]$, and the second matches the $2^{\ell-1}$ odd divisors bijectively to the
$2^{\ell-1}$ even integers in $[2,2^\ell]$; together they give a perfect matching from $D(u)$ to
$[1,2^\ell]=[1,\tau(u)]$. Coprimality is preserved: $(2d,a)=(d,a)=1$ since $a$ is odd, and
$(d,2a)=(d,a)=1$ since $d$ is odd.
\end{proof}

\section{M-numbers}

Say a positive integer is an M-number if it is not divisible by any $p^p$ for $p$ prime, i.e., $v_p(n)\le p-1$ for all primes $p$. Call a prime $p\mid n$ \emph{tight} if
$v_p(n)=p-1$, and write $n=n_Tn_R$ where $n_T=\prod_{p|n, p\text{ tight}}p^{p-1}$ and
$n_R=n/n_T$. Set $r=\mathrm{rad}(n_T)=\tau(n_T)$. Note that $(n_T,n_R)=1$: any prime $p$ dividing both would satisfy $v_p(n)\ge p$, contradicting the M-number condition.

We conjecture that every M-number is matchable. In this section we explain how to generalize the proofs of Lemma \ref{lem:partition} and Theorem \ref{th:sqfr} to show that every
M-number with sufficiently many non-tight prime factors and not divisible by the square of any large non-tight prime is matchable (Theorem \ref{thm:M}),
which implies in particular that the set of non-matchable M-numbers has asymptotic density $0$ (Corollary \ref{cor:M}).

The key tool is a partition lemma analogous to Lemma \ref{lem:partition}, but adapted to handle the tight and non-tight primes separately. (Note that in Lemma \ref{lem:partition} the only tight prime is 2, which is why 2 is handled separately.) Let $p_1<p_2<\cdots<p_\ell$ denote the non-tight prime factors of $n_R$ in increasing order, where $\ell=\omega(n_R)$, and set $a_i=v_{p_i}(n)$ for each $i$. For a parameter $j$ with $0\le j\le\ell$, let $m_j=\prod_{i\le j}p_i^{a_i}$, set $n'=n_R/m_j$, and $K=2^j$.

\begin{lemma}\label{lem:Mpartition}
Let $n$ be an {\rm M}-number with $n_T$, $n_R$, $r$, $m_j$, $n'$ as above. Assume $\tau(n')\ge 4^j$. Then $[1,\tau(n)]$ can be partitioned into $\tau(n_T)\cdot\tau(m_j)$ sets $A_d$ (one for each $d\mid n_Tm_j$), each of size $\tau(n')$, with every element of $A_d$ coprime to $d$. Moreover each $A_d$ is a $K$-AP combination with common differences dividing $rm_j$, so for any $e\mid n'$ the count of elements of $A_d$ divisible by $e$ is within $K$ of $\tau(n')/e$.
\end{lemma}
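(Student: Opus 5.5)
The plan is to build the partition in two stages: first handle the tight primes by a Chinese remainder construction, then refine by the primes of $m_j$ using the cutoff method of Lemma \ref{lem:partition}.

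\emph{Tight primes.} Copy the construction for $M_p$ in the introduction. Each divisor of $n_T$ has the form $d_T=\prod_{p\text{ tight}}p^{e_p}$ with $0\le e_p\le p-1$. Since $\tau(n)=r\,\tau(n_R)$, the integers in $[1,\tau(n)]$ fall into $r$ residue classes modulo $r=\prod_{p\text{ tight}}p$, and each class has exactly $\tau(n_R)$ elements. Put
\[
B_{d_T}=\{\,i\in[1,\tau(n)] : i\equiv e_p \pmod p \text{ for every tight } p\,\}.
\]
By the Chinese remainder theorem, $d_T\mapsto B_{d_T}$ is a bijection onto the residue classes mod $r$. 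If $e_p\ge1$ then $p\nmid i$, so every element of $B_{d_T}$ is coprime to $d_T$. Each $B_{d_T}$ is one arithmetic progression with difference $r$ and length $\tau(n_R)$, so it is a $1$-AP combination. This stage contributes no error.

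\emph{Primes of $m_j$.} Now refine each $B_{d_T}$ by induction on $i=1,\dots,j$, as in Lemma \ref{lem:partition}. Write $a=a_i$ and $p=p_i$. The divisors of $p^{a}$ are $p^0,\dots,p^{a}$, and the $a$ nonzero exponents need only $p\nmid x$. So each current set $T$, of size $S$, must be split into $a+1$ pieces of size $S/(a+1)$ (an integer, since $S=\tau(n')\prod_{i'\ge i}(a_{i'}+1)$), with the $a$ pieces for $p^1,\dots,p^{a}$ avoiding multiples of $p$.
\begin{enumerate}
\item Choose a cutoff $z$ so that $\{x\in T: x\le z,\ p\nmid x\}$ has exactly $aS/(a+1)$ elements.
\item Cut this set into $a$ consecutive blocks of equal size using further cutoffs. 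Truncating preserves the AP-combination count, so each block is the difference of two truncations of $T$.
\item Let the $p^0$ piece be $\{x\in T:p\mid x,\ x\le z\}\cup\{x\in T:x>z\}$.
\end{enumerate}
As in the proof of Lemma \ref{lem:partition}, each new piece is a $2k$-AP combination when $T$ is a $k$-AP combination. So after $j$ primes every $A_d$ is a $2^j=K$-AP combination. Restricting to multiples of $p_i$ multiplies the common differences by $p_i$, so all differences divide $r\,p_1\cdots p_j$, which divides $rm_j$. The final claim then follows from Lemma \ref{lem:APcomb-error}, since every $e\mid n'$ is coprime to $rm_j$.

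\emph{Main obstacle: the cutoff $z$ exists.} We need the number of multiples of $p$ in $T$ to be at most $S/(a+1)$. Lemma \ref{lem:APcomb-error} bounds that number by $S/p+2^{i-1}$. Because $p$ is not tight, $a\le p-2$, and therefore
\[
\frac{1}{a+1}-\frac1p\ \ge\ \frac{1}{p(p-1)}.
\]
So it suffices that $2^{i-1}\le S/(p(p-1))$. Using $S\ge\tau(n')(a+1)$ and $\tau(n')\ge4^j$ handles the case where $a+1$ is small relative to $p$, since then the slack is of order $S/(a+1)$. The delicate case is $a=p-2$ with $p$ large, where the bound only gives roughly $\tau(n')/p$.

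In that case I would first try to sharpen the count of multiples of $p$. Write $T$ explicitly as a union and difference of at most $2^{i-1}$ progressions with differences coprime to $p$, and use that each progression of length $L$ contains at most $\lceil L/p\rceil$ multiples of $p$. Failing that, I would use the ordering $p_1<\cdots<p_j$ together with $\tau(n')\ge4^j$ and $p_i^{a_i}\mid n$ to bound $p_i$ in terms of the available size. Either way, this inequality is the one step I expect to need real care, and it may force a slightly stronger hypothesis or an adjustment of the error constant.
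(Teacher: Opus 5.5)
Your construction is the paper's own: the Chinese-remainder split on the tight primes, then the cutoff refinement of Lemma~\ref{lem:partition} one prime of $m_j$ at a time. It has the same $2k$-AP bookkeeping, the same common differences dividing $rm_j$, and the same final appeal to Lemma~\ref{lem:APcomb-error}, and all of that is fine.

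What is missing is the one step with real content. You need each current set $T$ of size $S$ to contain at most $S/(a_i+1)$ multiples of $p_i$, so that the cutoff $z$ exists, and you do not prove this. That is a genuine gap.

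\begin{itemize}
\item Your first remedy settles only $i=1$. There $T$ is a single progression, and $S/(a_1+1)$ is an integer exceeding $S/p_1$, so there are at most $\lceil S/p_1\rceil\le S/(a_1+1)$ multiples. For $i\ge2$ the subtracted progressions need floors rather than ceilings, and the errors still accumulate. The example below shows that no sharper count can rescue this construction.
\item Your second remedy cannot work, because nothing in the hypotheses bounds $p_i$ in terms of $\tau(n')$.
\end{itemize}

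The paper closes this step with the chain
\[
\frac{L'}{a_i+1}-\frac{L'}{p_i}\ \ge\ \frac{\tau(n')}{p_i}\ \ge\ \frac{4^j}{p_j}\ >\ 2^{j-1}.
\]
The last inequality is equivalent to $p_j<2^{j+1}$, which is not assumed. So you have located exactly the point where the paper's proof is also deficient, and the construction does in fact fail.

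Take $n=7^3\cdot 61^{59}\cdot q_1q_2q_3q_4$ with primes $61<q_1<q_2<q_3<q_4$, and $j=2$. Then:
\begin{itemize}
\item There are no tight primes, so $r=1$; also $\tau(n')=16=4^j$ and $\tau(n)=3840$.
\item The first step cuts the $2880$ non-multiples of $7$ in $[1,3359]$ into three contiguous blocks of $960$. The last block is $T=\{x\in[2240,3359]:7\nmid x\}$.
\item $T$ contains $61k$ for $37\le k\le 55$ with $k\ne 42,49$, which is $17$ multiples of $61$.
\item The second step must split $T$ into $60$ parts of size $16$, with $59$ of them avoiding multiples of $61$. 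So at most $16$ multiples of $61$ are allowed, and the split is impossible.
\end{itemize}

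So, as you suspected, the argument needs either an extra hypothesis or a different construction. One sufficient hypothesis is
\[
2^{i-1}p_i<(p_i-a_i-1)\,\tau(n')\qquad(1\le i\le j).
\]
This gives $L'/p_i+2^{i-1}<L'/(a_i+1)$, and it holds automatically when $a_i+1\le p_i/2$. Alternatively, change the order of processing. In this example, taking $61$ first works: at that stage the set is a single progression, so integrality suffices. Taking $7$ second then also works, since $64(\tfrac14-\tfrac17)>2$. A hypothesis of this kind costs nothing for Corollary~\ref{cor:M}. Without something like it, neither your proposal nor the paper's argument proves the lemma as stated.
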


\begin{proof}
We follow the same plan as Lemma \ref{lem:partition}, with two modifications to handle tight primes and primes of $m_j$ with exponent greater than $1$.

The tight primes are handled first and all at once using an argument akin to the one in the introduction with $M_p$. Since $v_p(n)=p-1$ for each prime $p\mid r$, the Chinese remainder theorem gives a bijection $\rho\colon D(n_T)\to\{0,1,\ldots,r-1\}$ defined by $\rho(d)\equiv v_p(d)\pmod p$
 for each $p\mid r$. We partition $[1,\tau(n)]=[1,r\tau(n_R)]$ into $\tau(n_T)=r$ residue classes modulo $r$, pairing $d$ with the class $\rho(d)\pmod r$. Each class is an arithmetic progression of length $\tau(n_R)$ and common difference $r$, and for any $a \equiv \rho(d)\pmod r$
 for $d\mid n_T$, the coprimality $(a,d)=1$ holds because $p\mid a$ if and only if $p\nmid d$ for each $p\mid r$. This plays exactly the role of the even/odd split on the prime $2$ in Lemma \ref{lem:partition}, producing $\tau(n_T)$ equal-length arithmetic progressions, one per divisor of $n_T$, with no contribution to the size of $K$.

The primes of $m_j$ are then handled inductively. After processing $p_1,\ldots,p_{i-1}$, we have $r\cdot\tau(m_{i-1})$ sets (one per divisor of $n_Tm_{i-1}$), each a $2^{i-1}$-AP combination of size $L'=\tau(m_j)\tau(n')/\tau(m_{i-1})$ with constituent APs having common differences coprime to $p_i$. We then split each current set $S$ into $a_i+1$ equal subsets of size $L'/(a_i+1)$, one for each $v_{p_i}(d)\in\{0,\ldots,a_i\}$, with those for $v_{p_i}(d)\ge1$ consisting of non-multiples of $p_i$. Choose a cutpoint $z$ so that the non-multiples of $p_i$ in $S$ before $z$ number exactly $a_i\cdot L'/(a_i+1)$. By Lemma \ref{lem:APcomb-error}, the count of $p_i$-multiples in $S$ is within $2^{i-1}$ of $L'/p_i$, hence at most $L'/p_i+2^{i-1}$. The M-number condition $a_i<p_i-1$ and the hypothesis $\tau(n')\ge4^j$ give
\[\frac{L'}{a_i+1}-\frac{L'}{p_i} \ge \frac{L'}{p_i(a_i+1)} \ge \frac{\tau(n')}{p_i} \ge \frac{4^j}{p_j} > 2^{j-1} \ge 2^{i-1},\]
using that $p_i-a_i-1\ge1$, $L'/(a_i+1)=\tau(m_j)\tau(n')/\tau(m_i)\ge\tau(n')$, and $p_i\le p_j$. Thus $L'/p_i+2^{i-1}<L'/(a_i+1)$, so there are more than $a_i L'/(a_i+1)$ non-multiples of $p_i$ in $S$ and the cutpoint $z$ is well-defined. The non-multiples before $z$ and the complementary set (multiples before $z$ together with everything after $z$) are each $2^i$-AP combinations by the same set-difference and union arguments as in Lemma \ref{lem:partition}. The non-multiples before $z$ are then divided into $a_i$ equal contiguous parts by $a_i-1$ further interval cuts; restricting a $2^i$-AP combination to $\{a\le z'\}$ or $\{a>z'\}$ preserves the AP-combination number (as in Lemma \ref{lem:partition}), so each part remains a $2^i$-AP combination. After all $j$ steps the sets are $2^j$-AP combinations of size $\tau(n')$, giving $K=2^j$.
\end{proof}

\begin{theorem}\label{thm:M}
Let $n$ be an {\rm M}-number with non-tight primes $p_1<p_2<\dots<p_\ell$ in increasing
order.  If $\ell\ge44$ and $v_{p_i}(n)=1$ for $\ell\ge i>j:= \lfloor\sqrt{\omega(n_R)}\rfloor$, then $n$ is matchable.
\end{theorem}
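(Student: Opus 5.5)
The plan is to mimic the proof of Theorem \ref{th:sqfr}, with Lemma \ref{lem:Mpartition} replacing Lemma \ref{lem:partition}. Take $j=\lfloor\sqrt{\ell}\rfloor$ (so $\ell=\omega(n_R)$). Put $m_j=\prod_{i\le j}p_i^{a_i}$ and $n'=n_R/m_j$. By hypothesis $n'=p_{j+1}\cdots p_\ell$ is squarefree, so $\tau(n')=2^{\ell-j}$.

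\emph{Step 1: partition.} Check that $\tau(n')=2^{\ell-j}\ge 4^j$. This holds since $\ell-j\ge 2j$ once $\ell\ge 44$ and $j\le\sqrt\ell$. Lemma \ref{lem:Mpartition} then partitions $[1,\tau(n)]$ into sets $A_d$, one for each $d\mid n_Tm_j$, with the following properties:
\begin{itemize}
\item each $A_d$ has size $\tau(n')$;
\item every element of $A_d$ is coprime to $d$;
\item for each $e\mid n'$, the number of elements of $A_d$ divisible by $e$ is $\tau(n')/e+\theta$ with $|\theta|\le 2^j$.
\end{itemize}
As in the squarefree case, it suffices to find, for each $d$, a coprime bijection $A_d\to D(n')$. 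We then send $a\in A_d$ to $d$ times its partner. This is coprime because $(a,d)=1$ and $(d,n')=1$. The sets $dD(n')$ exhaust $D(n)$ because $n=n_Tm_jn'$ with pairwise coprime factors.

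\emph{Step 2: Hall's condition.} Fix $A=A_d$ and $S\subseteq A$, and set $k=\min_{s\in S}\omega((s,n'))$. We have $|N(S)|\ge\tau(n')/2^k$, and the case $k=0$ is trivial. The case analysis of Theorem \ref{th:sqfr} for $k\ge4$, $k=3$, $k=2$, $k=1$ can then be repeated almost verbatim. It only uses three inputs:
\begin{itemize}
\item the counting bound, now with error $2^j$ instead of $2^{j-1}$;
\item $\tau(n')=2^{\ell-j}$;
\item the bound $f(n')=\sum_{p\mid n'}1/p<0.93$, together with lower bounds on the smallest primes of $n'$ (such as $q_1\ge7$).
\end{itemize}
The primes of $n'$ are $p_{j+1},\dots,p_\ell$, where the $p_i$ are the non-tight primes. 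Since $p_i\ge P_i$, we get $f(n')\le\sum_{i=j+1}^{\ell}1/P_i$, so \eqref{eq:primesumbound} applies as before. Likewise the prime-size bounds $q\ge P_{j+1}$ and the truncation $d\le\tau(n)$ carry over. For the truncation one must use $\tau(n)$ rather than $2^\ell$ to bound $\overline{k}$. That is harmless, since the $k\ge4$ case is already handled through the binomial and entropy bound in $\omega(n')$, without needing $\overline{k}$ to be small except in the computed range.

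\emph{Main obstacle.} The doubled error term $2^j$ effectively replaces $2^{2j-1}$ by $2^{2j}$ in every exponent. The squarefree proof also treated $\ell$ between $45$ and $67$ with special small choices of $j$, while here $\ell\ge44$ and $j=\lfloor\sqrt\ell\rfloor$ is forced. So the work is re-verifying the numerical inequalities. For the ranges $\ell\ge 192$ and $68\le\ell<192$, the slack in the analytic bounds (e.g. $E_k<-1.03$) absorbs the extra factor of $2$, modulo a recheck. For $44\le\ell\le 67$ with $j=6$ or $7$, we have $\tau(n')\ge 2^{37}$, and I would check \eqref{eq:kge4constraint} and the $k\le3$ inequalities directly by computer, as the paper does for the moderate range. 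If some small $\ell$ fails, the fallback is to take $j$ smaller. This is permitted only if the hypothesis $v_{p_i}(n)=1$ for $i>j$ still holds; it does for any smaller $j'$ as long as $p_{j'+1},\dots,p_j$ happen to have exponent $1$. Otherwise, absorb the slack by sharpening the error term in Lemma \ref{lem:Mpartition} to $2^{j-1}$, which I expect is achievable by the same bookkeeping as in Lemma \ref{lem:partition}.
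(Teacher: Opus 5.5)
Your route is the paper's: partition $[1,\tau(n)]$ with Lemma \ref{lem:Mpartition} and rerun the Hall estimates of Theorem \ref{th:sqfr} on each $A_d$. The paper absorbs the doubled error by renaming $(\ell,j)$ as $(\ell+1,j+1)$ and using $p_i\ge P_{i+1}$ (since $2$ is never non-tight). This gives the same exponent $2^{2j-\ell}$ as your bookkeeping.

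The gap is your claim that cutting off at $\tau(n)$ instead of $2^\ell$ is harmless. The entropy bound for $\ell\ge192$ needs $k/\omega(n')<0.20$, and that came from $43^{\overline{k}}<2^\ell$; the $E_k<-1.03$ you rely on depends on it. The finite checks also only cover $k\le\overline{k}$. Here $\tau(n)=\tau(n_T)\tau(m_j)2^{\ell-j}$, and nothing in the hypotheses bounds $\tau(n_T)\tau(m_j)$. For instance, $n=Q^{Q-1}\prod_{3\le p\le197}p$ with $Q$ a prime larger than $19\cdot23\cdots197$ satisfies them with $\ell=44$ and $\tau(n)=Q\cdot2^{44}$. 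Then every $e\mid n'$ lies below $\tau(n)$, so $\overline{k}=\omega(n')$. For $k$ near $\omega(n')/3$, one has $k+\log_2\binom{\omega(n')}{k}\approx1.25\,\omega(n')$. So the error term $2^j\binom{\omega(n')}{k}$ exceeds $\tau(n')/2^k$ by a factor exponential in $\ell$, and the union-bound Hall argument collapses. Structurally, each $A_d$ is now a piece of a progression $\{\rho+Qt\}$ with arbitrary shift modulo $n'$. That is exactly the obstacle the paper names for strongly matchable numbers. The paper's proof is silent on this point too: its ``verbatim'' transfer tacitly keeps the cutoff $e\le2^{\ell+1}$, so following it will not close the gap. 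You need either an extra hypothesis bounding $\tau(n_T)\tau(m_j)$ (say by $2^{\epsilon\ell}$, which typical $n$ satisfy, so the density corollary can survive), or a genuinely better count than the union bound over $e$ of the elements of $A_d$ with many prime factors from $n'$.

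Separately, your small-$\ell$ plan fails even in the most favourable case $\tau(n)=2^{\ell+1}$. At $\ell=44$, $j=6$, the primes of $n'$ can be $19,23,\dots$, and $19\cdot23\cdots47<2^{45}$, so $k=8$ must be checked. But $E_8=2j+8-\ell+\log_2\binom{38}{8}\approx1.5$, so \eqref{eq:kge4constraint} fails. The paper's renaming gives the identical exponent, and Theorem \ref{th:sqfr} verified $\ell=45$ only with $j=4$. Neither of your fallbacks is available in general:
\begin{itemize}
\item A smaller $j$ needs exponent-one primes among $p_{j'+1},\dots,p_j$, which the hypothesis does not supply.
\item The error in Lemma \ref{lem:Mpartition} does not drop to $2^{j-1}$ by the same bookkeeping, because the first split by an odd prime already produces $2$-AP combinations.
\end{itemize}
The real repair there is to replace $\binom{\omega(n')}{k}$ by the actual number of $e\mid n'$ with $\omega(e)=k$ and $e\le\tau(n)$. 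That again presupposes control of $\tau(n)$.
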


\begin{proof}
    
The proof follows closely that of Theorem \ref{th:sqfr}, using Lemma \ref{lem:Mpartition} in place of Lemma \ref{lem:partition}. 
Lemma \ref{lem:Mpartition} partitions $[1,\tau(n)]$ into sets $A_d$, one for each $d\mid n_Tm_j$, each of size $\tau(n')$ and forming a $K$-AP combination with common differences dividing $rm_j$. (The hypothesis $\tau(n')\ge4^j$ holds because $\tau(n')=2^{\ell-j}\ge2^{2j}=4^j$, since $\ell-j\ge2j$ once $\ell \geq 9$.)

The hypothesis that $n$ is not divisible by the square of any non-tight prime $q>p_j$ ensures that $n'$ is squarefree. For each $d\mid n_Tm_j$ we apply Hall's theorem to match $D(n')$ to $A_d$. Setting $\tilde\ell=\ell+1$ and $\tilde\jmath=j+1$ gives $K=2^{\tilde\jmath-1}$ and $\omega(n')=\tilde\ell-\tilde\jmath$, so the Hall's theorem arguments of Theorem \ref{th:sqfr} apply verbatim with $(\tilde\ell,\tilde\jmath)$ in place of $(\ell,j)$, covering all $\ell\ge44$. (The shift $\tilde\ell=\ell+1$ reflects that the prime $2$, here a tight prime contributing to $n_T$, plays the same structural role as $p_1=2$ in Theorem \ref{th:sqfr}. In both cases it contributes to neither $K$ nor to $f$, and the bound $f(n')\le\sum_{i=\tilde\jmath+1}^{\tilde\ell}1/P_i<0.93$ matches that theorem's bound precisely.)

Once we have obtained the matchings $\phi_d\colon D(n')\to A_d$, we can construct the coprime matching of $D(n)$ to $[1,\tau(n)]$ via $\psi(d\cdot e)=\phi_d(e)$, as in Theorem \ref{th:sqfr}.
\end{proof}

Since the set of M-numbers having fewer than any fixed number of non-tight prime factors has asymptotic density zero, as do those with a repeated large non-tight prime factor, we obtain the following corollary.
\begin{corollary}
\label{cor:M}
Every {\rm M}-number is matchable except possibly for a set of asymptotic density zero.
\end{corollary}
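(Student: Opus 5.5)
The plan is to derive Corollary \ref{cor:M} from Theorem \ref{thm:M} through a density estimate. By that theorem, an M-number $n$ can fail to be matchable only if it lies in one of two sets:
\begin{enumerate}[(i)]
\item $\ell=\omega(n_R)<44$, that is, $n$ has fewer than $44$ non-tight prime factors;
\item $\ell\ge44$ and $q^2\mid n$ for some non-tight prime $q=p_i$ with $i>j=\lfloor\sqrt{\ell}\rfloor$.
\end{enumerate}
It therefore suffices to show that each of these sets has asymptotic density $0$.

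For (i), the tight primes contribute at most the factor $\prod_{p\le P}p^{p-1}$ for some $P$, so I split on $P$. Fix $N$. If the tight part $n_T$ involves a prime $p\ge N$, then $p^{p-1}\mid n$, and such $n$ have upper density at most $\sum_{p\ge N}p^{1-p}\ll N^{1-N}$. Otherwise $n_T$ is supported on primes $<N$, so $\omega(n)<44+\pi(N)$, a bounded number. Integers with a bounded number of prime factors have counting function $\ll_N x(\log\log x)^{O_N(1)}/\log x=o(x)$, by the same estimate used in Corollary \ref{cor:ud}. Letting $N\to\infty$ shows that (i) has density $0$.

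For (ii), I again fix $N$. The integers divisible by $q^2$ for some prime $q>N$ have upper density at most $\sum_{q>N}q^{-2}\ll 1/N$. Otherwise the bad prime $q=p_i$ satisfies $q\le N$ with $i>\sqrt{\ell}$, so at least $\sqrt{\ell}$ non-tight primes of $n$ are below $N$. This forces $\sqrt{\ell}<\pi(N)$, so $\omega(n)$ is again bounded, by roughly $\pi(N)^2+\pi(N)$. As in (i), such integers have density $0$. Letting $N\to\infty$ shows that (ii) has density $0$.

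Combining (i) and (ii), the non-matchable M-numbers form a set of density $0$. Together with Corollary \ref{cor:ud}, this also shows that the matchable numbers have density $\alpha$. The main subtlety is handling the dependence of $j$ on $\ell$ in (ii). The resolution is that a bad small prime forces many small prime factors, which again bounds $\omega(n)$. The fact that integers with boundedly many prime factors have density zero is the standard Hardy--Ramanujan/Landau bound.
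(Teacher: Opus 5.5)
Your proposal is correct and follows the paper's route: you deduce the corollary from Theorem \ref{thm:M} by showing that M-numbers with fewer than $44$ non-tight primes, or with a repeated non-tight prime of rank above $\sqrt{\ell}$, form density-zero sets, and you supply the step the paper's one-line remark glosses over, namely that a repeated prime $q\le N$ of rank $>\sqrt{\ell}$ forces $\ell<\pi(N)^2$. One detail in (ii) is left implicit but holds: after discarding those $n$ with $q^2\mid n$ for some prime $q>N$, every tight prime $p\ge 3$ still has $p^2\mid n$ (since $p-1\ge 2$), so all tight primes are $\le N$, which justifies your bound $\omega(n)<\pi(N)^2+\pi(N)$.
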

With Corollary \ref{cor:ud} we have the following.
\begin{corollary}
\label{cor:dens}
The set of matchable numbers has asymptotic density $\alpha$.
\end{corollary}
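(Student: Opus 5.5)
The plan is to squeeze the density of the matchable set between the lower density supplied by Corollary \ref{cor:M} and the upper bound of Corollary \ref{cor:ud}. Write $\mathcal{M}$ for the set of matchable numbers and $\mathcal{A}$ for the set of M-numbers. Since $\mathcal{A}$ is the set of integers not divisible by any $p^p$, it has asymptotic density $\alpha=\prod_p(1-p^{-p})$. This is the standard sieve for the complement of a union of ``multiples of $b_p$'' sets with $\sum_p 1/b_p<\infty$: truncate the product at primes $p\le N$, where the condition is periodic modulo $\prod_{p\le N}p^p$, and bound the tail by $\sum_{p>N}x/p^p$.

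\textbf{Lower bound.} By Corollary \ref{cor:M}, $\mathcal{A}\setminus\mathcal{M}$ has density $0$. Hence
\[
\#\{n\le x: n\in\mathcal{M}\}\ \ge\ \#\{n\le x:n\in\mathcal{A}\}-\#\{n\le x:n\in\mathcal{A}\setminus\mathcal{M}\}=\alpha x+o(x),
\]
so the lower density of $\mathcal{M}$ is at least $\alpha$.

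\textbf{Upper bound.} Corollary \ref{cor:ud} gives that the upper density of $\mathcal{M}$ is at most $\alpha$. In the argument there, the matchable non-M-numbers form a density-zero set, so
\[
\#\{n\le x:n\in\mathcal{M}\}\ \le\ \#\{n\le x:n\in\mathcal{A}\}+o(x).
\]

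\textbf{Conclusion.} The lower and upper densities of $\mathcal{M}$ coincide and equal $\alpha$, so $\mathcal{M}$ has asymptotic density $\alpha$. Moreover, the symmetric difference $\mathcal{M}\,\triangle\,\mathcal{A}$ has density zero, as claimed in the introduction.

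The only real content lies in Corollary \ref{cor:M}, which we are assuming. If its justification needed scrutiny, the main obstacle would be confirming that M-numbers failing the hypotheses of Theorem \ref{thm:M} have density $0$. These are M-numbers with fewer than $44$ non-tight prime factors, or with $q^2\mid n$ for some non-tight $q$ larger than the $j$-th non-tight prime, where $j=\lfloor\sqrt{\omega(n_R)}\rfloor$. For the first class, integers with $\omega(n)$ bounded by a constant have density $0$ (Hardy--Ramanujan, exactly as in the proof of Corollary \ref{cor:ud}). For the second class, I would use that a typical $n$ has $\omega(n_R)\to\infty$, so $j\to\infty$ and $p_j\to\infty$ outside a density-zero set. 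On that set, $p_j\ge y$ for any fixed $y$, and the bad event forces $q^2\mid n$ for some $q>y$, which has density at most $\sum_{q>y}q^{-2}\ll 1/y$. Letting $y\to\infty$ then makes this class density zero as well.
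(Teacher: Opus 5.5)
Your proposal is correct and is the paper's own argument: the lower bound comes from Corollary~\ref{cor:M}, and the upper bound comes from Corollary~\ref{cor:ud}, whose proof shows that the matchable non-M-numbers have density $0$. One small point in your optional aside on Corollary~\ref{cor:M}: having fewer than $44$ non-tight primes does not by itself bound $\omega(n)$, so you also need the easy but unstated fact that numbers with many tight primes form a set of small density, since each odd tight prime $p$ forces $p^{p-1}\mid n$.
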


\section{Strongly matchable numbers}
Recall that we say $n$ is strongly matchable if for each coprime arithmetic progression of $\tau(n)$
integers there is a coprime matching to $D(n)$.
\begin{conjecture}
\label{conj}
A number $n$ is strongly matchable if and only if it is an {\rm M}-number.
\end{conjecture}
One would think that our techniques for matchable numbers could be applied here but there
is a difficulty.  In the proof of Theorem \ref{th:sqfr} we strongly used that we are mapping
$D(n)$ to an interval of small numbers, namely $[1,\tau(n)]$, but with strongly matchable,
the interval is not only generalized to a coprime arithmetic progression, it can be anywhere on 
the number line.  The latter condition is the difficulty.
We at least have a few results in the direction of the conjecture.
\begin{proposition}
\label{prop-strong}
Every strongly matchable number is an {\rm M}-number.
\end{proposition}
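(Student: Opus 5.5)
We argue by contrapositive: if $n$ is not an M-number, we exhibit a single coprime arithmetic progression of length $\tau(n)$ admitting no coprime matching to $D(n)$. The obstruction is the same counting argument as in the Proposition of Section~2. That argument only fell short because the interval $[1,\tau(n)]$ begins at $1$, so the number of multiples of $p$ in it is $\lfloor\tau(n)/p\rfloor$ rather than $\lceil\tau(n)/p\rceil$. Since we may now choose the progression freely, we start it at a multiple of $p$, which removes that defect entirely.

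Concretely, suppose $p^p\midd n$ for some prime $p$, and let $k=v_p(n)\ge p$. We take the progression to be the interval
\[
I=\{p,\,p+1,\,\dots,\,p+\tau(n)-1\},
\]
which has common difference $1$ and is therefore a coprime arithmetic progression of $\tau(n)$ integers. Its first term is divisible by $p$, so the number of multiples of $p$ in $I$ is
\[
\left\lceil \tau(n)/p\right\rceil \ge \tau(n)/p .
\]

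On the divisor side we argue exactly as in the Proposition of Section~2. Partition $D(n)$ into $k+1$ classes of equal size $\tau(n/p^k)$ according to the exponent of $p$. Only the class with exponent $0$ consists of divisors coprime to $p$, so the number of such divisors is
\[
\frac{\tau(n)}{k+1}\le\frac{\tau(n)}{p+1}<\frac{\tau(n)}{p}.
\]

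In any coprime matching, every multiple of $p$ in $I$ must be matched to a divisor coprime to $p$, and the matching is injective. But there are strictly fewer divisors coprime to $p$ than multiples of $p$ in $I$. Equivalently, Hall's condition fails for the set $S$ of multiples of $p$ in $I$, since $N(S)$ is exactly the set of divisors of $n$ coprime to $p$. Hence no coprime matching exists, $n$ is not strongly matchable, and the proposition follows.

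I do not expect a real obstacle here. The one point to check is the convention for a ``coprime arithmetic progression'' (first term coprime to the common difference). An interval of consecutive integers qualifies under any reasonable reading, and if needed one may use any progression with difference coprime to $p$ whose first term is $\equiv0\pmod p$; the same count then goes through.
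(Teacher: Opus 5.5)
Your proof is correct and uses the same counting obstruction as the paper: at most $\tau(n)/(p+1)$ divisors of $n$ are coprime to $p$, while a suitably placed progression of length $\tau(n)$ contains at least $\tau(n)/p$ multiples of $p$. The only difference is that the paper finds such a progression by averaging over shifts $I+m$, whereas you give an explicit one (the interval starting at $p$); this is slightly cleaner, since an interval is a coprime progression under any reading of the definition.
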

\begin{proof}
We prove the contrapositive. Suppose $n$ is not an M-number, and so $p^p\midd n$ for some prime $p$. Then at least $p/(p+1)$ of the members of $D(n)$ are divisible
by $p$.  Let $I$ be a coprime arithmetic progression of length $\tau(n)$.  Each shift $I+m$
is again a coprime arithmetic progression of length $\tau(n)$.  On average as $m$ varies, exactly
 $(p-1)/p$ of the integers in the set are coprime to $p$, so there is at least one such $m$
 where $I+m$ has less than $p/(p+1)$ of its members coprime to $p$.
We cannot coprimely match $D(n)$ with $I+m$, so $n$ is not strongly matchable.
\end{proof}
\begin{proposition}
\label{prop-strongfill}
If $n$ is strongly matchable and not divisible by the prime $p$, then
$p^{p-1}n$ is strongly matchable.
\end{proposition}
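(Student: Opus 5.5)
Let $N=p^{p-1}n$, so $\tau(N)=p\,\tau(n)$, and let $I=\{a,a+b,\dots,a+(p\tau(n)-1)b\}$ be any coprime arithmetic progression of length $p\tau(n)$, with $\gcd(a,b)=1$. The plan mimics the $M_p$ construction from the introduction, which is the same device the proof of Lemma \ref{lem:Mpartition} uses for tight primes. We will partition $I$ into $p$ coprime arithmetic progressions of length $\tau(n)$, one for each exponent $i\in\{0,1,\dots,p-1\}$. The progression assigned to $i\ge1$ will consist entirely of non-multiples of $p$, and the one assigned to $i=0$ may contain anything. Each divisor of $N$ is uniquely $p^i d$ with $d\mid n$ and $0\le i\le p-1$. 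If $J_i$ is the progression for exponent $i$, strong matchability of $n$ gives a coprime matching $\phi_i\colon D(n)\to J_i$. We then set $\psi(p^id)=\phi_i(d)$. This is coprime because $(\phi_i(d),d)=1$, and $p\nmid\phi_i(d)$ whenever $i\ge1$.

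To build the partition, split on whether $p\mid b$.

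\emph{Case $p\mid b$.} Since $\gcd(a,b)=1$, we have $p\nmid a$, so no element of $I$ is divisible by $p$. Cut $I$ into $p$ consecutive blocks of length $\tau(n)$. Each block is an arithmetic progression with difference $b$ and a first term coprime to $b$ (every term of $I$ is coprime to $b$), so each block is a coprime AP. Assign the blocks to the exponents arbitrarily.

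\emph{Case $p\nmid b$.} Split $I$ into the $p$ subprogressions $J_t=\{a+(t+sp)b:0\le s<\tau(n)\}$ for $t=0,\dots,p-1$. Each $J_t$ has length $\tau(n)$ and common difference $pb$. Because $p\nmid b$, the terms of $J_t$ are all congruent to $a+tb$ modulo $p$, and these residues run over every class mod $p$ exactly once as $t$ varies. So exactly one $J_{t_0}$ consists of multiples of $p$; assign it to $i=0$ and the other $J_t$ to $i=1,\dots,p-1$.

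It remains to check that each $J_t$ is a coprime AP, meaning $\gcd(a+tb,\,pb)=1$. We have $\gcd(a+tb,b)=\gcd(a,b)=1$, so only the factor $p$ can obstruct this, and it does so exactly for $t=t_0$.

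\textbf{Main obstacle.} The block $J_{t_0}$ is not a coprime progression in the paper's sense. Its terms are coprime to $b$ but all divisible by $p$, so the hypothesis that $n$ is strongly matchable does not apply to it directly. The fix I would try first is to divide out $p$. Since $p\mid b'=pb$ and every term of $J_{t_0}$ is a multiple of $p$, we can write $J_{t_0}=p\,J'$ with $J'=\{(a+t_0b)/p+sb\}$. Then $J'$ is a coprime AP of length $\tau(n)$, because $\gcd((a+t_0b)/p,\,b)$ divides $\gcd(a+t_0b,b)=1$. Apply strong matchability of $n$ to $J'$ to get $\phi'\colon D(n)\to J'$, and set $\phi_0(d)=p\,\phi'(d)$. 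Since $p\nmid n$, we get $(p\phi'(d),d)=(\phi'(d),d)=1$.

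This is the one place the hypothesis $p\nmid n$ is essential. I expect this to be the only delicate point; everything else is the Chinese-remainder bookkeeping above.
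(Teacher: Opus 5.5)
Your proof is correct and is essentially the paper's argument: you split $I$ into $p$ subprogressions of length $\tau(n)$, send $D(n)$ to the one that may consist of multiples of $p$, and send each $p^kD(n)$ with $1\le k\le p-1$ to one of the others. (The paper uses the interleaved subsequences $(i_{j+kp})_k$ in both cases, whereas you use consecutive blocks when $p\mid b$; both work.) Your explicit treatment of $J_{t_0}$ goes slightly beyond the paper: you write it as $p\,J'$ with $J'$ a coprime progression and use $p\nmid n$, which justifies a step the paper leaves implicit when it asserts that coprime matchings $\psi_j$ exist for every $I_j$, including the one whose terms are all divisible by $p$.
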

\begin{proof}
We have $\tau(p^{p-1}n)=p\tau(n)$. Let $I$ be a coprime arithmetic progression of length $p\tau(n)$, say $I=\{i_1,i_2,\dots,i_{p\tau(n)}\}$.
For $j=1,2,\dots,p$, let $I_j$ be the subsequence $(i_{j+kp})_k$ of length $\tau(n)$.
At most one of these subsequences has its terms all divisible by $p$, and the other subsequences are all coprime to $p$. If there is some $j$ where all the
terms of $I_{j}$ are divisible by $p$, denote this $j$ by $j_0$, otherwise let $j_0=1$.
There are coprime matchings $\psi_j$ from $D(n)$ to $I_j$ for each $j$. We construct a coprime matching from $D(p^{p-1}n)$ to $I$ as follows.
For $D(n)$, we already have $\psi_{j_0}$. For $p^kD(n)$, $1\le k\le p-1$, $k\ne j_0$,
we map it to one of the subsequences $I_j$ not used via $\psi_j$; that is, we map $p^kd\in p^kD(n)$ to $\psi_j(d)$. The union of these maps gives a coprime matching from $D(p^{p-1}n)$ to $I$, completing the proof.
\end{proof}
\begin{proposition}
\label{prop-strongdens}
The set of strongly matchable numbers has lower asymptotic density greater than $4/11$.
\end{proposition}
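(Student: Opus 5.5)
The plan is to exhibit an explicit, positive-density family of strongly matchable numbers, built from $1$ by two operations: Proposition \ref{prop-strongfill} (adjoining a full tight prime power $p^{p-1}$), and a new ``large prime'' extension lemma. The number $1$ is trivially strongly matchable. The new lemma I would prove first is:

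\emph{If $n$ is strongly matchable, $p\nmid n$ is prime, and $p\ge 2\tau(n)$, then $pn$ is strongly matchable.}

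\textbf{Proof of the extension lemma.} Let $I=\{a,a+b,\dots,a+(2\tau(n)-1)b\}$ with $(a,b)=1$. Split $I$ into its first $\tau(n)$ terms $I_1$ and its last $\tau(n)$ terms $I_2$. Both are coprime arithmetic progressions with the same difference $b$, since $(a+\tau(n)b,b)=(a,b)=1$. If $p\mid b$, then $p\nmid a$, so no term of $I$ is divisible by $p$. Otherwise the terms divisible by $p$ are spaced $p\ge 2\tau(n)$ indices apart, so at most one lies in $I$. Hence at least one half, say $I_{i}$, contains no multiple of $p$.

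Using the hypothesis on $n$, match $D(n)$ coprimely to the other half $I_{3-i}$. Then match $pD(n)$ to $I_{i}$ via $pd\mapsto\psi_i(d)$, where $\psi_i$ is a coprime matching of $D(n)$ to $I_i$. This is coprime because $(pd,x)=(d,x)=1$ whenever $p\nmid x$. Since $D(pn)=D(n)\cup pD(n)$, this gives the required coprime matching for $pn$.

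\textbf{The family.} Combining the lemma with Proposition \ref{prop-strongfill}, every number of the following form is strongly matchable:
\[
n=T\cdot q_1q_2\cdots q_k,
\]
where $T$ is a product of tight powers $p^{p-1}$ over some set of primes, $q_1<\dots<q_k$ are primes not dividing $T$, and $q_i\ge 2^{i}\tau(T)$ for each $i$.

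The order of building matters. Adjoining the $q_i$ first in increasing order only needs $q_i\ge 2\cdot 2^{i-1}$. However, Proposition \ref{prop-strongfill} can be applied afterwards in any order, since it has no size condition. So the cleaner condition is simply $q_i\ge 2^i$, with the tight powers added at the end. I would restrict to tight parts $T\mid 2\cdot 3^2\cdot 5^4$, which suffices numerically.

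\textbf{Lower density, and the main obstacle.} The remaining and main task is to show that the set
\[
\mathcal{F}=\{T m:\ T \mid 2\cdot 3^2\cdot5^4 \text{ tight},\ (m,T)=1,\ m \text{ squarefree},\ i\text{-th smallest prime of } m \ \ge 2^i\ \forall i\}
\]
has lower density exceeding $4/11$.

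To do this, I would condition on the set of small primes dividing $m$. The constraint $q_i\ge 2^i$ only restricts the primes below some bound $B$ (say $B=2^{6}$ or so). The reason is that a typical $m$ has about $\log\log x$ prime factors, and the $i$-th one is typically around $e^{e^{i}}\gg 2^i$. I would show the exceptional set where some $q_i<2^i$ for $i$ beyond a fixed threshold has density tending to $0$, for example by bounding the density of $n$ with $i$ distinct prime factors below $2^i$ by $\sum_{p_1<\dots<p_i<2^i}1/(p_1\cdots p_i)\le(\log\log 2^i+O(1))^i/i!$, which is summable and small for large $i$.

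After this reduction, the density of $\mathcal F$ becomes a finite Euler-product-type computation over the local conditions at primes below $B$, times the density of squarefree-ness above $B$. I would evaluate this explicitly, grouping by which of $2,3,5,7,\dots$ divide $m$ and to what order the tight parts $2$, $3^2$, $5^4$ are allowed. Checking that the resulting constant is $>4/11$ is the step I expect to be hardest: it is delicate bookkeeping and may need $B$ and the allowed tight set chosen carefully. If the margin is thin, I would refine by allowing the $q_i$ to be added in a cleverer order so that the size condition is weaker.
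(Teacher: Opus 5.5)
\textbf{Verdict: the framework matches the paper and is correct, but the proof is incomplete, because the density bound $>4/11$ is never proved.}

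Your construction is the paper's. You use the same large-prime extension lemma; the paper assumes $p>2\tau(n)$, and your $p\ge 2\tau(n)$ works equally well. You also use Proposition~\ref{prop-strongfill} in the same way, with the tight powers adjoined last. So every $Tm$ with $m$ squarefree, $(m,T)=1$ and $q_i\ge 2^i$ is strongly matchable, and that part is right.

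The gap is that you never prove the inequality the proposition asserts. The density estimate is only a plan. Your tail bound $(\log\log 2^i+O(1))^i/i!$, with an unspecified $O(1)$, gives no numerical control. Because your $m$ may contain $2,3,5,7$, that crude bound is far from small for moderate $i$: for $i=4$, $(\sum_{p<16}1/p)^4/4!>0.13$. You would therefore need an exact computation over all subsets of the primes below your cutoff $B$, and you have not done it. There is also a bookkeeping trap. The prime $2$ can enter either as the tight power $2^1$ or as $q_1=2$, so summing the densities of the pieces $T\cdot\{m\}$ over $T$ double counts unless you require $m$ odd.

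The paper's way to make the computation short is to discard most of your family and keep only $m$ with no prime factor $\le 7$.
\begin{itemize}
\item The condition $q_i\ge 2^i$ is then automatic for $i\le 3$.
\item Squarefree $m$ coprime to $210$ have density $\frac{6}{\pi^2}\prod_{p\le 7}\frac{p}{p+1}=0.22164\ldots$.
\item Any violator is a multiple of a product of $j\ge 4$ distinct primes in $(8,2^j)$. Use $\sum_{p\le 2^j}1/p<\log j$ for $j\ge 4$ (Rosser--Schoenfeld) and subtract $\sum_{p\le 7}1/p>1.17619$. The violators then have upper density at most $\sum_{j\ge 4}(\log j-1.17619)^j/j!<0.000332$.
\item Hence these $m$ have lower density $>0.2213$.
\item They are coprime to $30$, so the eight sets $T\cdot\{m\}$ with $T\mid 2\cdot 3^2\cdot 5^4$ tight are disjoint. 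The lower density is therefore at least
\[
0.2213\,(1+\tfrac12)(1+\tfrac19)(1+5^{-4})>0.3694>4/11 .
\]
The paper also includes $7^6$, which contributes negligibly.
\end{itemize}

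So the margin is not thin. Your larger family would give considerably more, but only after a much longer computation that the proposal does not contain.
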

\begin{proof}
We first note that if $n$ is strongly matchable and $p$ is a prime with $p\nmidd n$ and $p>2\tau(n)$, then $pn$ is also strongly matchable. Indeed, take an arbitrary
coprime arithmetic progression $I$ of $\tau(pn)=2\tau(n)$ integers. There are coprime matchings of $D(n)$ to both the first half of $I$ and the second half of $I$, say $\psi_1,\psi_2$.
Not both of these
halves contain a multiple of $p$, so map $pD(n)$ to a half not containing a multiple of $p$
via $p\psi_i$, and use the unadorned injection for the other half.

Let $S_j$ be the set of odd squarefree numbers $s$ with $\omega(s)=j$ and each prime factor of $s$ is $< 2^{j}$, and let $S$ be the union of the sets $S_j$.
Suppose that $n>1$ is an odd squarefree number not divisible by any member of $S$. Then
\[
n=q_1q_2\dots q_k,\quad 3\le q_1<q_2<\dots <q_k,\quad \hbox{each }q_j\hbox{ prime},
\quad\hbox{each }q_j\ge2^{j}.
\]
Indeed, if not and $q_j< 2^{j}$ for some $j$, then
$q_1q_2\dots q_j\in S_j$, contradicting our assumption that $n$ is not divisible by any member of $S$.

We claim that any odd squarefree $n$ not divisible by any element of $S$ is strongly matchable.  First, $n=1$ is strongly matchable, and any odd prime $q$ is strongly matchable by the
argument above, since $q>2\tau(1)$.
 Now suppose that 
$n_j:=q_1q_2\cdots q_j$ is strongly matchable.   Since $\tau(n_j)=2^j$ and $q_{j+1}>2^{j+1}$, it
follows by the argument above that $n_jq_{j+1}$ is strongly matchable.  
Induction completes the argument that $n$ is strongly matchable.

It remains to show that such numbers $n$ comprise a set of positive
lower density. The reciprocal sum of the primes $q\le 2^{j}$ is $\log j+O(1)$,
and in fact from \cite[(3.18)]{RS} and a calculation, this sum is $<\log j$ when $j\ge4$.
Since $\sum_{p\le 7}1/p > 1.17619$, it follows that 
\[
\sum_{s\in S'_j}\frac1s\le\frac{(\log j-1.17619)^j}{j!},\quad j\ge4,
\]
where $S'_j$ is the set of $s\in S_j$ with all prime factors $>8$.
Let  $T$ denote the set of squarefree numbers
$n$ with all prime factors $>8$.   Then $T$ has an asymptotic density
equal to
\[
\frac
6{\pi^2}\prod_{p\le7}\left(1-\frac1p\right)\left(1-\frac1{p^2}\right)^{-1}=0.221640\dots\,.
\]
If $s\in S$ divides some $n\in T$, then each prime factor of $s$ is
$>8$, so for some $j\ge4$, we have $s\in S_j$, and so $s\in S_j'$.  The upper asymptotic
density of the set of multiples of the elements in $\cup_{j\ge4}S'_j$
is at most
\[
\sum_{j\ge4}\frac{(\log j-1.17619)^j}{j!}<0.000331239.
\]
Let $T'$ denote the subset of $T$ consisting of numbers not divisible by any member of $S$, so that
every member of $T'$ is strongly matchable.  The lower asymptotic density of $T'$ is greater than
$0.2213$.  We can boost this using Proposition \ref{prop-strongfill}. For each $n\in T'$ and each $p\in\{2,3,5,7\}$, we have $p\nmid n$ (since all prime factors of $n$ exceed $8$), so Proposition \ref{prop-strongfill} shows that $p^{p-1}n$ is also strongly matchable. Taking potential prime factors of this form into account improves the lower bound for the lower density of the set of strongly matchable numbers to at least
\[
0.2213\cdot\left(1+\frac{1}{2}\right)\left(1+\frac{1}{9}\right)\left(1+\frac{1}{5^4}\right)\left(1+\frac{1}{7^6}\right)
>0.3694>\frac{4}{11}.
\]
\end{proof}

\section*{Acknowledgments}
We thank Gerry Myerson for telling us about matchable numbers, and Bernardo Recam\'an
for his encouragement.  We are also grateful to the referee for many helpful comments.

\newpage

\scriptsize
\begin{longtable}{@{}rrrrrrrrrrr@{}}
\caption{Census of values of $\omega((s,n))$ for odd $s \in [1,2^{\ell+1}]$ where $n$ is the product of the first $\ell$ odd primes; large values are rounded up.}\label{tab:oddcensus}\\
\toprule
$\ell$ & $\omega_{\max}$\!\!\!\!\! & $c_{0}$ & $c_{1}$ & $c_{2}$ & $c_{3}$ & $c_{4}$ & $c_{5}$ &
$c_{6}$ & $c_{\geq 7}$ \\
\midrule
\endfirsthead

\toprule
$\ell$ & $\omega_{\max}$\!\!\! & $c_{0}$ & $c_{1}$ & $c_{2}$ & $c_{3}$ & $c_{4}$ & $c_{5}$ &
$c_{6}$ & $c_{\geq 7}$ \\
\midrule
\endhead

\bottomrule
\endfoot
3 & 2 & \num{3} & \num{4} & \num{1} & & & & & \\
4 & 2 & \num{7} & \num{7} & \num{2} & & & & & \\
5 & 2 & \num{13} & \num{11} & \num{8} & & & & & \\
6 & 3 & \num{25} & \num{21} & \num{17} & \num{1} & & & & \\
7 & 3 & \num{47} & \num{43} & \num{33} & \num{5} & & & & \\
8 & 3 & \num{89} & \num{95} & \num{56} & \num{16} & & & & \\
9 & 3 & \num{164} & \num{210} & \num{95} & \num{43} & & & & \\
10 & 4 & \num{309} & \num{441} & \num{176} & \num{94} & \num{4} & & & \\
11 & 4 & \num{597} & \num{878} & \num{376} & \num{179} & \num{18} & & & \\
12 & 4 & \num{1166} & \num{1736} & \num{798} & \num{341} & \num{55} & & & \\
13 & 5 & \num{2293} & \num{3376} & \num{1758} & \num{612} & \num{152} & \num{1} & & \\
14 & 5 & \num{4505} & \num{6612} & \num{3758} & \num{1138} & \num{364} & \num{7} & & \\
15 & 5 & \num{8897} & \num{12940} & \num{7892} & \num{2233} & \num{768} & \num{38} & & \\
16 & 5 & \num{17558} & \num{25510} & \num{16243} & \num{4553} & \num{1540} & \num{132} & & \\
17 & 6 & \num{34585} & \num{50650} & \num{32767} & \num{9755} & \num{2921} & \num{393} &
\num{1} & \\
18 & 6 & \num{68151} & \num{100919} & \num{65561} & \num{21015} & \num{5468} &
\num{1021} & \num{9} & \\
19 & 6 & \num{134282} & \num{201536} & \num{130617} & \num{45041} & \num{10387} &
\num{2370} & \num{55} & \\

20 & 6 & \num{264692} & \num{402354} & \num{260661} & \num{95132} & \num{20367} &
\num{5157} & \num{213} & \\
21 & 6 & \num{522290} & \num{803185} & \num{521116} & \num{197833} & \num{41521} &
\num{10487} & \num{720} & \\
22 & 7 & \num{1031482} & \num{1601975} & \num{1045031} & \num{405967} & \num{87226} &
\num{20541} & \num{2076} & \num{6} \\
23 & 7 & \num{2039192} & \num{3193416} & \num{2100106} & \num{825167} & \num{185875} &
\num{39443} & \num{5363} & \num{46} \\
24 & 7 & \num{4035965} & \num{6363543} & \num{4225826} & \num{1666753} & \num{396604} &
\num{75606} & \num{12689} & \num{230} \\
25 & 7 & \num{7992094} & \num{12678222} & \num{8506329} & \num{3361015} & \num{840077} & \num{147784} & \num{28017} & \num{894} \\
26 & 8 & \num{15830224} & \num{25256133} & \num{17121427} & \num{6780185} &
\num{1762060} & \num{297171} & \num{58718} & \num{2946} \\
27 & 8 & \num{31367217} & \num{50318652} & \num{34440697} & \num{13693052} &
\num{3657862} & \num{613215} & \num{118465} & \num{8568} \\
28 & 8 & \num{62163303} & \num{1.00e8} & \num{69234321} & \num{27697616} &
\num{7530374} & \num{1290469} & \num{233192} & \num{22820} \\
29 & 8 & \num{1.23e8} & \num{2.00e8} & \num{1.39e8} & \num{56080479} & \num{15404730} &
\num{2739542} & \num{454761} & \num{56236} \\
30 & 8 & \num{2.45e8} & \num{3.98e8} & \num{2.79e8} & \num{1.13e8} & \num{31329808} &
\num{5803412} & \num{887715} & \num{129521} \\
31 & 9 & \num{4.85e8} & \num{7.94e8} & \num{5.60e8} & \num{2.30e8} & \num{63616039} &
\num{12232414} & \num{1760793} & \num{283999} \\
32 & 9 & \num{9.64e8} & \num{1.58e9} & \num{1.12e9} & \num{4.65e8} & \num{1.29e8} &
\num{25579607} & \num{3565554} & \num{598677} \\
33 & 9 & \num{1.91e9} & \num{3.16e9} & \num{2.25e9} & \num{9.40e8} & \num{2.62e8} &
\num{53108556} & \num{7367039} & \num{1228836} \\
34 & 9 & \num{3.80e9} & \num{6.30e9} & \num{4.52e9} & \num{1.90e9} & \num{5.32e8} &
\num{1.09e8} & \num{15416636} & \num{2473356} \\
35 & 9 & \num{7.55e9} & \num{1.26e10} & \num{9.06e9} & \num{3.83e9} & \num{1.08e9} &
\num{2.24e8} & \num{32498453} & \num{4948093} \\
36 & 10 & \num{1.50e10} & \num{2.51e10} & \num{1.82e10} & \num{7.73e9} & \num{2.20e9} &
\num{4.59e8} & \num{68545789} & \num{9907571} \\
37 & 10 & \num{2.98e10} & \num{5.00e10} & \num{3.64e10} & \num{1.56e10} & \num{4.47e9} &
\num{9.36e8} & \num{1.44e8} & \num{19982844} \\
38 & 10 & \num{5.93e10} & \num{9.99e10} & \num{7.30e10} & \num{3.14e10} & \num{9.07e9} &
\num{1.91e9} & \num{3.01e8} & \num{40742931} \\
39 & 10 & \num{1.18e11} & \num{1.99e11} & \num{1.46e11} & \num{6.33e10} & \num{1.84e10} &
\num{3.89e9} & \num{6.26e8} & \num{83930709} \\
40 & 10 & \num{2.35e11} & \num{3.98e11} & \num{2.93e11} & \num{1.28e11} & \num{3.74e10} &
\num{7.93e9} & \num{1.29e9} & \num{1.74e8} \\
41 & 11 & \num{4.66e11} & \num{7.93e11} & \num{5.87e11} & \num{2.57e11} & \num{7.57e10} &
\num{1.62e10} & \num{2.66e9} & \num{3.64e8} \\
42 & 11 & \num{9.28e11} & \num{1.58e12} & \num{1.18e12} & \num{5.17e11} & \num{1.53e11} &
\num{3.29e10} & \num{5.46e9} & \num{7.61e8} \\

43 & 11 & \num{1.85e12} & \num{3.16e12} & \num{2.36e12} & \num{1.04e12} & \num{3.11e11} &
\num{6.71e10} & \num{1.12e10} & \num{1.59e9} \\
44 & 11 & \num{3.67e12} & \num{6.31e12} & \num{4.72e12} & \num{2.10e12} & \num{6.29e11} &
\num{1.37e11} & \num{2.29e10} & \num{3.32e9} \\
45 & 11 & \num{7.31e12} & \num{1.26e13} & \num{9.46e12} & \num{4.22e12} & \num{1.27e12} &
\num{2.79e11} & \num{4.68e10} & \num{6.91e9} \\
\end{longtable}

\newpage
\scriptsize
\begin{longtable}{@{}rrrrrrr@{}}
\caption{Selected gcd counts $\gcd_d = \#\{\text{odd } s \in [1,2^{\ell+1}] : (s,n)=d\}$ and auxiliary count $x_3 = \#\{\text{odd } s \in [1,2^{\ell+1}] : 3\mid s \text{ or } \omega((s,n))\geq 3\}$, where
$n$ is the product of the first $\ell$ odd primes; large values are rounded up.  Only values needed in the proof are shown; blank entries are not necessarily zero.}\label{tab:oddgcds}\\
\toprule
$\ell$ & $\gcd_{105}$ & $\gcd_{15}$ & $\gcd_{21}$ & $\gcd_{3}$ & $\mathrm{x_3}$ & $\gcd_{5}$ \\
\midrule
\endfirsthead

\toprule
$\ell$ & $\gcd_{105}$ & $\gcd_{15}$ & $\gcd_{21}$ & $\gcd_{3}$ & $\mathrm{x_3}$ & $\gcd_{5}$ \\
\midrule
\endhead

\bottomrule
\endfoot
3 & & & & \num{2} & & \\
4 & & & & \num{3} & & \\
5 & & \num{2} & & \num{5} & & \\
6 & & \num{3} & & \num{10} & & \\
7 & & \num{5} & & \num{21} & & \\
8 & & \num{9} & & \num{42} & & \\
9 & & \num{17} & & \num{86} & & \\
10 & & \num{36} & & \num{166} & & \\
11 & & \num{76} & & \num{315} & & \\
12 & & \num{152} & & \num{604} & & \\
13 & & \num{300} & & \num{1164} & & \\
14 & & \num{590} & & \num{2256} & & \\
15 & & \num{1139} & & \num{4416} & & \\
16 & & \num{2218} & & \num{8682} & & \\
17 & & \num{4314} & & \num{17139} & & \\
18 & & \num{8453} & & \num{33877} & & \\

19 & & \num{16639} & & \num{66979} & & \\
20 & & \num{32846} & & \num{132281} & & \\
21 & & \num{64979} & & \num{261372} & & \num{130677} \\
22 & & \num{128676} & & \num{516379} & & \num{258258} \\
23 & \num{42293} & \num{254834} & \num{169795} & \num{1020848} & & \num{510604} \\
24 & \num{83729} & \num{504881} & \num{336514} & \num{2019785} & \num{6334949} &
\num{1010179} \\
25 & \num{166004} & \num{1000144} & \num{666745} & \num{3998146} & \num{12706706} &
\num{1999526} \\
26 & \num{329275} & \num{1980869} & \num{1320714} & \num{7916785} & \num{25481743} &
\num{3958891} \\
27 & \num{653169} & \num{3923935} & \num{2616309} & \num{15683688} & \num{51096769} &
\num{7842216} \\
28 & \num{1295341} & \num{7773941} & \num{5183268} & \num{31078505} & \num{1.02e8} &
\num{15539148} \\
29 & \num{2568728} & \num{15406877} & \num{10272217} & \num{61607914} & \num{2.06e8} &
\num{30803093} \\
30 & \num{5097322} & \num{30566423} & \num{20378662} & \num{1.22e8} & \num{4.12e8} &
\num{61123344} \\
31 & \num{10115856} & \num{60661064} & \num{40441466} & \num{2.43e8} & \num{8.26e8} &
\num{1.21e8} \\
32 & \num{20080727} & \num{1.20e8} & \num{80288746} & \num{4.82e8} & \num{1.66e9} &
\num{2.41e8} \\
33 & \num{39866096} & \num{2.39e8} & \num{1.59e8} & \num{9.57e8} & \num{3.32e9} &
\num{4.78e8} \\
34 & \num{79187622} & \num{4.75e8} & \num{3.17e8} & \num{1.90e9} & \num{6.66e9} &
\num{9.50e8} \\
35 & \num{1.57e8} & \num{9.44e8} & \num{6.29e8} & \num{3.78e9} & \num{1.34e10} &
\num{1.89e9} \\
36 & \num{3.13e8} & \num{1.88e9} & \num{1.25e9} & \num{7.50e9} & \num{2.68e10} &
\num{3.75e9} \\
37 & \num{6.21e8} & \num{3.73e9} & \num{2.49e9} & \num{1.49e10} & \num{5.36e10} &
\num{7.46e9} \\
38 & \num{1.24e9} & \num{7.41e9} & \num{4.94e9} & \num{2.97e10} & \num{1.08e11} &
\num{1.48e10} \\
39 & \num{2.46e9} & \num{1.47e10} & \num{9.83e9} & \num{5.90e10} & \num{2.15e11} &
\num{2.95e10} \\
40 & \num{4.89e9} & \num{2.93e10} & \num{1.95e10} & \num{1.17e11} & \num{4.32e11} &
\num{5.86e10} \\
41 & \num{9.72e9} & \num{5.83e10} & \num{3.89e10} & \num{2.33e11} & \num{8.65e11} &
\num{1.17e11} \\
42 & \num{1.93e10} & \num{1.16e11} & \num{7.73e10} & \num{4.64e11} & \num{1.73e12} &
\num{2.32e11} \\
43 & \num{3.85e10} & \num{2.31e11} & \num{1.54e11} & \num{9.23e11} & \num{3.47e12} &
\num{4.62e11} \\
44 & \num{7.65e10} & \num{4.59e11} & \num{3.06e11} & \num{1.84e12} & \num{6.96e12} &

\num{9.18e11} \\
45 & \num{1.52e11} & \num{9.14e11} & \num{6.09e11} & \num{3.66e12} & \num{1.39e13} &
\num{1.83e12} \\
\end{longtable}

\newpage
\scriptsize
\begin{longtable}{@{}rrrrrrrrrrr@{}}
\caption{Census of values of $\omega((s,n))$ for $s \in [1,2^{\ell}]$ where $n$ is the product of the first $\ell$ odd primes; large values are rounded up.}\label{tab:censusinterval}\\
\toprule
$\ell$ & $\omega_{\max}$ & $c_{0}$ & $c_{1}$ & $c_{2}$ & $c_{3}$ & $c_{4}$ & $c_{5}$ &
$c_{6}$ & $c_{\geq 7}$ \\
\midrule
\endfirsthead

\toprule
$\ell$ & $\omega_{\max}$ & $c_{0}$ & $c_{1}$ & $c_{2}$ & $c_{3}$ & $c_{4}$ & $c_{5}$ &
$c_{6}$ & $c_{\geq 7}$ \\
\midrule
\endhead

\bottomrule
\endfoot

3 & 1 & \num{4} & \num{4} & & & & & & \\
4 & 2 & \num{6} & \num{9} & \num{1} & & & & & \\
5 & 2 & \num{11} & \num{18} & \num{3} & & & & & \\
6 & 2 & \num{22} & \num{30} & \num{12} & & & & & \\
7 & 3 & \num{44} & \num{51} & \num{32} & \num{1} & & & & \\
8 & 3 & \num{87} & \num{91} & \num{72} & \num{6} & & & & \\
9 & 3 & \num{171} & \num{180} & \num{138} & \num{23} & & & & \\
10 & 3 & \num{328} & \num{375} & \num{251} & \num{70} & & & & \\
11 & 4 & \num{626} & \num{793} & \num{451} & \num{174} & \num{4} & & & \\
12 & 4 & \num{1200} & \num{1646} & \num{847} & \num{381} & \num{22} & & & \\
13 & 4 & \num{2316} & \num{3359} & \num{1653} & \num{785} & \num{79} & & & \\
14 & 5 & \num{4510} & \num{6717} & \num{3407} & \num{1507} & \num{242} & \num{1} & & \\
15 & 5 & \num{8832} & \num{13321} & \num{7145} & \num{2823} & \num{639} & \num{8} & & \\
16 & 5 & \num{17400} & \num{26245} & \num{15033} & \num{5318} & \num{1494} & \num{46} & & \\
17 & 5 & \num{34338} & \num{51657} & \num{31407} & \num{10240} & \num{3247} & \num{183} & & \\
18 & 6 & \num{67840} & \num{101977} & \num{64647} & \num{20478} & \num{6606} & \num{595} & \num{1} & \\
19 & 6 & \num{134032} & \num{202022} & \num{131428} & \num{42198} & \num{12911} &

\num{1687} & \num{10} & \\
20 & 6 & \num{264639} & \num{401506} & \num{264780} & \num{88485} & \num{24820} &
\num{4281} & \num{65} & \\
21 & 6 & \num{522702} & \num{799799} & \num{530538} & \num{186141} & \num{47696} &
\num{9993} & \num{283} & \\
22 & 6 & \num{1032593} & \num{1595114} & \num{1060794} & \num{389735} & \num{93243} &
\num{21796} & \num{1029} & \\
23 & 7 & \num{2041220} & \num{3182621} & \num{2121272} & \num{808354} & \num{186821} &
\num{45105} & \num{3209} & \num{6} \\
24 & 7 & \num{4038813} & \num{6350266} & \num{4246629} & \num{1659760} & \num{382912} &
\num{89884} & \num{8900} & \num{52} \\
25 & 7 & \num{7995366} & \num{12665960} & \num{8515547} & \num{3381389} & \num{797705} & \num{175613} & \num{22568} & \num{284} \\
26 & 7 & \num{15832644} & \num{25252891} & \num{17098634} & \num{6854406} &
\num{1673928} & \num{341895} & \num{53269} & \num{1197} \\
27 & 8 & \num{31366915} & \num{50335662} & \num{34359252} & \num{13851260} &
\num{3510598} & \num{671330} & \num{118470} & \num{4241} \\
28 & 8 & \num{62157666} & \num{1.00e8} & \num{69065837} & \num{27961797} &
\num{7329469} & \num{1341027} & \num{251722} & \num{13225} \\
29 & 8 & \num{1.23e8} & \num{2.00e8} & \num{1.39e8} & \num{56440743} & \num{15199047} &
\num{2730221} & \num{515906} & \num{37449} \\
30 & 8 & \num{2.45e8} & \num{3.99e8} & \num{2.79e8} & \num{1.14e8} & \num{31245966} &
\num{5636269} & \num{1027549} & \num{97383} \\
31 & 8 & \num{4.85e8} & \num{7.95e8} & \num{5.60e8} & \num{2.30e8} & \num{63883643} &
\num{11755588} & \num{2023659} & \num{237081} \\
32 & 9 & \num{9.64e8} & \num{1.58e9} & \num{1.12e9} & \num{4.64e8} & \num{1.30e8} &
\num{24606119} & \num{3981833} & \num{546323} \\
33 & 9 & \num{1.91e9} & \num{3.16e9} & \num{2.25e9} & \num{9.38e8} & \num{2.64e8} &
\num{51472118} & \num{7908459} & \num{1205696} \\
34 & 9 & \num{3.80e9} & \num{6.30e9} & \num{4.52e9} & \num{1.90e9} & \num{5.36e8} &
\num{1.07e8} & \num{15904855} & \num{2563508} \\
35 & 9 & \num{7.55e9} & \num{1.26e10} & \num{9.06e9} & \num{3.83e9} & \num{1.09e9} &
\num{2.22e8} & \num{32484581} & \num{5314501} \\
36 & 9 & \num{1.50e10} & \num{2.51e10} & \num{1.82e10} & \num{7.73e9} & \num{2.21e9} &
\num{4.57e8} & \num{67141601} & \num{10821108} \\
37 & 10 & \num{2.98e10} & \num{5.00e10} & \num{3.64e10} & \num{1.56e10} & \num{4.47e9} &
\num{9.38e8} & \num{1.40e8} & \num{21818440} \\
38 & 10 & \num{5.93e10} & \num{9.98e10} & \num{7.30e10} & \num{3.14e10} & \num{9.07e9} &
\num{1.92e9} & \num{2.92e8} & \num{43904888} \\
39 & 10 & \num{1.18e11} & \num{1.99e11} & \num{1.46e11} & \num{6.33e10} & \num{1.84e10} &
\num{3.91e9} & \num{6.10e8} & \num{88614835} \\
40 & 10 & \num{2.35e11} & \num{3.98e11} & \num{2.93e11} & \num{1.28e11} & \num{3.73e10} &
\num{7.97e9} & \num{1.27e9} & \num{1.80e8} \\
41 & 10 & \num{4.66e11} & \num{7.93e11} & \num{5.87e11} & \num{2.57e11} & \num{7.57e10} &
\num{1.62e10} & \num{2.63e9} & \num{3.69e8} \\
42 & 11 & \num{9.28e11} & \num{1.58e12} & \num{1.18e12} & \num{5.18e11} & \num{1.53e11} &

\num{3.30e10} & \num{5.44e9} & \num{7.60e8} \\
43 & 11 & \num{1.85e12} & \num{3.16e12} & \num{2.36e12} & \num{1.04e12} & \num{3.10e11} &
\num{6.72e10} & \num{1.12e10} & \num{1.57e9} \\
44 & 11 & \num{3.67e12} & \num{6.31e12} & \num{4.72e12} & \num{2.10e12} & \num{6.28e11} &
\num{1.37e11} & \num{2.30e10} & \num{3.27e9} \\
45 & 11 & \num{7.31e12} & \num{1.26e13} & \num{9.46e12} & \num{4.22e12} & \num{1.27e12} &
\num{2.79e11} & \num{4.70e10} & \num{6.81e9} \\
\end{longtable}

\newpage
\begin{longtable}{@{}rrrrrrr@{}}
\caption{Selected gcd counts $\gcd_d = \#\{s \in [1,2^{\ell}] : (s,n)=d\}$ and auxiliary count $x_3 =
\#\{s \in [1,2^{\ell}] : 3\mid s \text{ or } \omega((s,n))\geq 3\}$, where $n$ is the product of the first $
\ell$ odd primes; large values are rounded up.   Only values needed in the proof are shown; blank entries are not necessarily zero.}\label{tab:gcdsinterval}\\
\toprule
$\ell$ & $\gcd_{105}$ & $\gcd_{15}$ & $\gcd_{21}$ & $\gcd_{3}$ & $\mathrm{x_3}$ & $\gcd_{5}$ \\
\midrule
\endfirsthead

\toprule
$\ell$ & $\gcd_{105}$ & $\gcd_{15}$ & $\gcd_{21}$ & $\gcd_{3}$ & $\mathrm{x_3}$ & $\gcd_{5}$ \\
\midrule
\endhead

\bottomrule
\endfoot
3 & & & & \num{2} & & \\
4 & & & & \num{4} & & \\
5 & & & & \num{7} & & \\
6 & & & & \num{11} & & \\
7 & & \num{7} & & \num{19} & & \\
8 & & \num{12} & & \num{37} & & \\
9 & & \num{20} & & \num{75} & & \\
10 & & \num{35} & & \num{154} & & \\
11 & & \num{68} & & \num{310} & & \\
12 & & \num{138} & & \num{612} & & \\
13 & & \num{280} & & \num{1195} & & \\
14 & & \num{566} & & \num{2320} & & \\
15 & & \num{1135} & & \num{4504} & & \\
16 & & \num{2241} & & \num{8783} & & \\
17 & & \num{4400} & & \num{17182} & & \\

18 & & \num{8607} & & \num{33788} & & \\
19 & & \num{16846} & & \num{66658} & & \\
20 & & \num{33048} & & \num{131710} & & \\
21 & & \num{65061} & & \num{260517} & & \num{130105} \\
22 & & \num{128425} & & \num{515466} & & \num{257532} \\
23 & & \num{254093} & \num{169296} & \num{1020164} & & \num{509903} \\
24 & \num{84153} & \num{503484} & \num{335445} & \num{2020025} & \num{6320878} &
\num{1009955} \\
25 & \num{166227} & \num{998109} & \num{665082} & \num{4000127} & \num{12695486} &
\num{2000257} \\
26 & \num{328933} & \num{1978571} & \num{1318640} & \num{7921325} & \num{25484436} &
\num{3961321} \\
27 & \num{651867} & \num{3922276} & \num{2614483} & \num{15691232} & \num{51130089} &
\num{7846881} \\
28 & \num{1292724} & \num{7774396} & \num{5182819} & \num{31088550} & \num{1.03e8} &
\num{15546187} \\
29 & \num{2564674} & \num{15411506} & \num{10274822} & \num{61618338} & \num{2.06e8} &
\num{30811418} \\
30 & \num{5092434} & \num{30577232} & \num{20386319} & \num{1.22e8} & \num{4.12e8} &
\num{61130339} \\
31 & \num{10111570} & \num{60679387} & \num{40455703} & \num{2.43e8} & \num{8.27e8} &
\num{1.21e8} \\
32 & \num{20079847} & \num{1.20e8} & \num{80309360} & \num{4.82e8} & \num{1.66e9} &
\num{2.41e8} \\
33 & \num{39872679} & \num{2.39e8} & \num{1.59e8} & \num{9.57e8} & \num{3.32e9} &
\num{4.78e8} \\
34 & \num{79206028} & \num{4.75e8} & \num{3.17e8} & \num{1.90e9} & \num{6.66e9} &
\num{9.50e8} \\
35 & \num{1.57e8} & \num{9.44e8} & \num{6.29e8} & \num{3.78e9} & \num{1.33e10} &
\num{1.89e9} \\
36 & \num{3.13e8} & \num{1.88e9} & \num{1.25e9} & \num{7.50e9} & \num{2.68e10} &
\num{3.75e9} \\
37 & \num{6.21e8} & \num{3.73e9} & \num{2.49e9} & \num{1.49e10} & \num{5.36e10} &
\num{7.46e9} \\
38 & \num{1.24e9} & \num{7.41e9} & \num{4.94e9} & \num{2.97e10} & \num{1.08e11} &
\num{1.48e10} \\
39 & \num{2.46e9} & \num{1.47e10} & \num{9.83e9} & \num{5.90e10} & \num{2.15e11} &
\num{2.95e10} \\
40 & \num{4.89e9} & \num{2.93e10} & \num{1.95e10} & \num{1.17e11} & \num{4.32e11} &
\num{5.86e10} \\
41 & \num{9.72e9} & \num{5.83e10} & \num{3.89e10} & \num{2.33e11} & \num{8.65e11} &
\num{1.17e11} \\
42 & \num{1.93e10} & \num{1.16e11} & \num{7.73e10} & \num{4.64e11} & \num{1.73e12} &
\num{2.32e11} \\
43 & \num{3.85e10} & \num{2.31e11} & \num{1.54e11} & \num{9.23e11} & \num{3.47e12} &
\num{4.62e11} \\

44 & \num{7.65e10} & \num{4.59e11} & \num{3.06e11} & \num{1.84e12} & \num{6.96e12} &
\num{9.18e11} \\
45 & \num{1.52e11} & \num{9.14e11} & \num{6.09e11} & \num{3.66e12} & \num{1.39e13} &
\num{1.83e12} \\
46 & \num{3.03e11} & \num{1.82e12} & \num{1.21e12} & \num{7.28e12} & \num{2.79e13} &
\num{3.64e12} \\
\end{longtable}

\normalsize


\bigskip

\begin{thebibliography}{99}
\bibitem{BP}
T. Bohman and F. Peng, Coprime mappings and lonely runners, Mathematika {\bf62} (2022), 784--804.

\bibitem{M}
N. McNew, Permutations and the divisor graph of $ [1, n]$, Mathematika {\bf63} (2023), 51--67.

\bibitem{P1}
C. Pomerance, Coprime matchings, Integers {\bf 22} (2022), \#A2, 9 pp.

\bibitem{P2}
C. Pomerance, Coprime permutations, Integers {\bf 22} (2022), \#83, 20 pp.

\bibitem{PS}
C. Pomerance and J. L. Selfridge, Proof of D. J. Newman's coprime mapping conjecture, Mathematika {\bf27} (1980), 69--83.

\bibitem{Re}
B. Recam\'an, Coprime matching of an integer's $d(n)$ divisors with the set of the first $d(n)$ integers,
mathoverflow, 2022.

\bibitem{R}
B. Rosser, The $n$th prime is greater than $n\log n$, Proc Lond. Math. Soc. (2),
{\bf45} (1939), 21--44.

\bibitem{RS}
J. B. Rosser and L. Schoenfeld, Approximate formulas for some functions of prime numbers,
Illinois J. Math. {\bf6} (1962), 64--94.

\bibitem{SS}
A. Sah and M. Sawhney, Enumerating coprime permutations, Mathematika {\bf68} (2022), 1120--1134.


\end{thebibliography}
\end{document}